\documentclass[11pt, bezier,amstex]{article}
\usepackage{amssymb,amsmath,latexsym, bbm}
\usepackage{epsfig}

\usepackage{epic,eepic,wrapfig,color}
\usepackage{tocbibind}

\usepackage{enumerate}
\usepackage[numbers,sort&compress]{natbib}
\usepackage{amsthm}
\usepackage{ifxetex}

\ifxetex
\usepackage[xetex]{hyperref}
\else
\usepackage{hyperref}
\fi 

\pagestyle{plain}
\setlength{\oddsidemargin}{0in}
\setlength{\topmargin}{-0.5in}
\setlength{\headheight}{0in}
\setlength{\textheight}{9in}
\setlength{\textwidth}{6.5in}

\newtheorem{thm}{Theorem}[section]
\newtheorem{cor}[thm]{Corollary}
\newtheorem{lemma}[thm]{Lemma}
\newtheorem{prop}[thm]{Proposition}
\newtheorem{defn}[thm]{Definition}

\numberwithin{equation}{section}

\allowdisplaybreaks[4]

\newcommand\R{{\mathbb{R}}}
\newcommand\bbE{{\mathbb{E}}}
\newcommand\bbP{{\mathbb{P}}}
\newcommand\im{{\mathrm{i}}}
\newcommand\bbD{{\mathbb{D}}}
\newcommand\bbK{{\mathbb{K}}}

\newcommand\bfQ{{\mathbf{Q}}}

\newcommand\calF{{\mathcal{F}}}
\newcommand\sL{{\mathcal{L}}}

\newcommand\e{{\mathrm{e}}}
\newcommand\rd{{\mathrm{d}}}

\newcommand\id{{\mathbbm{1}}}

\def\eps{\varepsilon}
\def\wt{\widetilde}
\def\<{\langle}
\def\>{\rangle}

\begin{document}
\title{\bf Uniqueness of Stable Processes with Drift}
\author{{\bf Zhen-Qing Chen}\thanks{Research partially supported by NSF Grant DMS-1206276, and NNSFC Grant 11128101.}
  \quad and \quad
{\bf Longmin Wang}\thanks{Research partially supported by NSFC Grant 11101222, and the Fundamental Research Funds for the Central Universities.}}

\date{September 11, 2013}
\maketitle

\begin{abstract}
Suppose that $d\geq1$ and $\alpha\in (1, 2)$.
Let $Y$ be a rotationally symmetric $\alpha$-stable process on
 $\R^d$ and $b$ a
$\R^d$-valued measurable function on $\R^d$ belonging to a certain
Kato class of $Y$. We show that $\rd X^b_t=\rd Y_t+b(X^b_t)\rd t$ with $X^b_0=x$ has a
 unique weak solution for every $x\in \R^d$.
Let $\sL^b=-(-\Delta)^{\alpha/2} + b \cdot \nabla$,
which is the infinitesimal generator of $X^b$.
Denote by $C^\infty_c(\R^d)$
the space of smooth functions on $\R^d$ with compact support.
We further show that the martingale problem
for $(\sL^b, C^\infty_c(\R^d))$ has a unique solution
 for each initial value $x\in \R^d$.
\end{abstract}

\noindent{\bf AMS 2010 Mathematics Subject Classification:} Primary
  60H10,  47G20; Secondary 60G52

\noindent{\bf Keywords and Phrases:} stable process, drift,
stochastic differential equation, martingale problem,
existence, uniqueness

\section{Introduction}\label{sec:intro}

Throughout this paper, unless otherwise stated, $d\geq 1$ and $\alpha \in (1, 2)$.
 A rotationally symmetric
$\alpha$-stable process $Y $ in $\R^d$  is a L\'evy process
with characteristic function given by
\begin{equation}
  \bbE[\exp(\im \xi\cdot (Y_t -Y_0 ))] = \exp\left(-t
       |\xi|^ \alpha  \right),\quad \xi
  \in\R^d.
\end{equation}
The infinitesimal generator of $Y $ is the
fractional Laplacian $\Delta^{\alpha/2}:= -(-\Delta)^{\alpha/2}$.
Here
we use ``$:=$'' to denote a definition.
Denote by $B(x,r)$ the open ball in $\R^d$
centered at $x\in\R^d$ with radius $r>0$ and $\rd x$ the
Lebesgue measure on $\R^d$.

\begin{defn}
For a real-valued function $f$ on $\R^d$ and $r>0$, define
\begin{equation}
  \label{eq:kato}
  M_f^{\alpha}(r) := \sup_{x\in\R^d}
  \int_{B(x,r)}\frac{|f(y)|}{|x-y|^{d+1-\alpha}} \rd y.
\end{equation}
A function $f$ on $\R^d$ is said to belong to the \emph{Kato class}
$\bbK_{d,\alpha-1}$ if $\lim_{r\downarrow0}M_f^\alpha(r)=0$.
\end{defn}

Using H\"older's inequality, it is easy to see that for every
$p>d/(\alpha-1)$, $L^\infty(\R^d;\rd x)+ L^p(\R^d;\rd x)\subset
\bbK_{d,\alpha-1}$.  Throughout this paper we will assume
$b=(b_1, \cdots, b_d)$ is a $\R^d$-valued function on $\R^d$
such that $|b|\in \bbK_{d, \alpha-1}$.

 In this paper, we are concerned with the existence and uniqueness of
 weak solutions to following stochastic differential equation (SDE)
\begin{equation}\label{e:1.8}
\rd X^b_t=\rd Y_t +b(X^b_t) \rd t, \quad X^b_0 =x.
\end{equation}

A solution of \eqref{e:1.8}, if it exists, will be called
$\alpha$-stable process with drift $b$.
When $Y$ is a Brownian motion (which corresponds to $\alpha =2)$,
it is well known  that
Brownian motion with drift can be obtained from Brownian motion through
a change of measure called Girsanov transform. But for symmetric $\alpha$-stable
process (where $0<\alpha <2$), SDE
\eqref{e:1.8} can not be solved by a change of measure. This is because
 $Y$ is a purely discontinuous L\'evy process and so the effect of
  a Girsanov transform can only produce a purely discontinuous ``drift term";
  see \cite{CS, HWY}.

In this paper, we show that \eqref{e:1.8} has a unique weak solution
for every initial value $x$. We achieve this by showing that the corresponding
martingale problem for SDE \eqref{e:1.8} is well-posed.
 Define
$\sL^b=\Delta^{\alpha/2}+b\cdot \nabla$.
It easy to see by using Ito's formula that $\sL^b$ is the
infinitesimal generator for  solutions of \eqref{e:1.8}.
Let ${\mathbb D}([0, \infty); \R^d)$ be the space of right continuous
$\R^d$-valued functions having left limits on $[0, \infty)$, equipped
with Skorokhod topology. For $t\geq 0$, denote by $X_t$ the
projection coordinate map on ${\mathbb D}([0, \infty); \R^d)$.
A probability measure $\bfQ$ on the Skorokhod space
${\mathbb D}([0, \infty); \R^d)$ is said to
 be a solution to the martingale problem
for $(\sL^b, C^\infty_c(\R^d))$ with initial value $x\in \R^d$
if   $\bfQ(X_0=x)=1$ and for every
$f\in C^\infty_c(\R^d)$,
$\int_0^t | \sL^b f (X_s)| \rd s <\infty$
$\bfQ$-a.s. for every $t>0$ and
\begin{equation}\label{e:1.7}
 M^f_t:=f(X_t) -f (X_0)-\int_0^t \sL^b f(X_s) \rd s
\end{equation}
is a $\bfQ$-martingale.
The martingale problem for $(\sL^b, C^\infty_c(\R^d))$
with initial value $x\in \R^d$
is said to be well-posed if it has a unique solution.
The following is the main result of this paper.

\begin{thm}\label{T:1.2}
For each $x\in \R^d$,   SDE \eqref{e:1.8}
has a unique weak solution. Moreover,   weak solutions with different
starting points can all be constructed on the canonical
Skorohod space $\bbD ([0, \infty); \R^d)$ and
the symmetric $\alpha$-stable process $Y$ in \eqref{e:1.8}
can be chosen  in such a way that it is the same
for all starting point $x\in \R^d$.
The law of the unique weak solution to SDE \eqref{e:1.8} is the
unique solution to the martingale problem for $(\sL^b, C^\infty_c(\R^d))$.
\end{thm}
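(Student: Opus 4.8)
I would reduce the whole statement to the well-posedness of the martingale problem for $(\sL^b, C^\infty_c(\R^d))$ and then transfer it back to the SDE. The analytic engine is the semigroup and resolvent of $\sL^b=\Delta^{\alpha/2}+b\cdot\nabla$. Since $\alpha>1$, the gradient term is a \emph{subcritical} perturbation of the $\alpha$-stable generator, and the Kato condition $\lim_{r\downarrow0}M^\alpha_{|b|}(r)=0$ is exactly what is needed to build the fundamental solution $p^b(t,x,y)$ of $\partial_t u=\sL^b u$ by the Duhamel/perturbation series from the free stable kernel $p(t,x,y)$ and its gradient $\nabla_z p(s,z,y)$. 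This series converges and yields, on finite time intervals, two-sided bounds $p^b(t,x,y)\asymp p(t,x,y)$, the gradient estimate $|\nabla_x p^b(t,x,y)|\lesssim t^{-1/\alpha}\,p(t,x,y)$ (with the usual adjustments), and Hölder continuity of $x\mapsto\nabla_x p^b(t,x,y)$. From $p^b$ one obtains a conservative Feller semigroup $(P^b_t)$ on $C_0(\R^d)$ with resolvent $G^b_\lambda f(x)=\int_0^\infty\e^{-\lambda t}P^b_t f(x)\,\rd t$, whose generator acts as $\sL^b$ on $C^\infty_c(\R^d)$.

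\textbf{Existence and the SDE $\Leftrightarrow$ martingale problem equivalence.} The Feller process attached to $(P^b_t)$ solves the martingale problem: this is routine since $C^\infty_c(\R^d)$ lies in the domain of its generator, on which the generator acts as $\sL^b$, once one checks $\int_0^t|b(X_s)|\,\rd s<\infty$ a.s.\ by a Khasminskii-type estimate using the Kato bound and the $\alpha$-stable part (localized by stopping times, since $C^\infty_c$ functions have compact support). Conversely, a weak solution of \eqref{e:1.8} solves the martingale problem via the It\^o formula for the purely discontinuous process $Y$ applied to $f\in C^\infty_c(\R^d)$; and, given any solution $\bfQ$ of the martingale problem, setting $Y_t:=X_t-X_0-\int_0^t b(X_s)\,\rd s$ and testing the martingale problem against smooth cutoffs of $x\mapsto\e^{\im\xi\cdot x}$ shows that $Y$ has stationary independent increments with characteristic exponent $-|\xi|^\alpha$, hence is a rotationally symmetric $\alpha$-stable process, so $\bfQ$ is the law of a weak solution of \eqref{e:1.8}. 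Thus Theorem \ref{T:1.2} is equivalent to well-posedness of the martingale problem.

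\textbf{Uniqueness.} Let $\bfQ$ be any solution of the martingale problem starting at $x$. Fix a large $\lambda>0$ and $f\in C^\infty_c(\R^d)$, and set $u:=G^b_\lambda f$. By the heat-kernel estimates, $u\in C_b(\R^d)$, $\nabla u\in C_b(\R^d)$ with $\nabla u$ Hölder continuous of order exceeding $\alpha-1$ (so $\Delta^{\alpha/2}u$ is pointwise defined), and $(\lambda-\sL^b)u=f$. Approximating $u$ by $u_n\in C^\infty_c(\R^d)$ with $u_n\to u$, $\nabla u_n\to\nabla u$, $\Delta^{\alpha/2}u_n\to\Delta^{\alpha/2}u$ locally uniformly, and localizing by stopping times to control $\int_0^t|b(X_s)|\,\rd s$, one deduces that $u$ is an admissible test function and that $\e^{-\lambda t}u(X_t)-u(X_0)+\int_0^t\e^{-\lambda s}f(X_s)\,\rd s$ is a $\bfQ$-martingale. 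Taking $\bbE^{\bfQ}$, using $u(X_0)=u(x)$ $\bfQ$-a.s., and letting $t\to\infty$ gives $\bbE^{\bfQ}\bigl[\int_0^\infty\e^{-\lambda t}f(X_t)\,\rd t\bigr]=u(x)=G^b_\lambda f(x)$. Since $f$ and $\lambda$ are arbitrary, this determines all one-dimensional distributions of $X$ under $\bfQ$; a standard regular-conditional-probability/Markov argument along the Skorokhod path then upgrades it to uniqueness of the full law. Once uniqueness in law holds, realizing the unique laws $\bfQ_x$ on $\bbD([0,\infty);\R^d)$ and putting $Y_t:=X_t-X_0-\int_0^t b(X_s)\,\rd s$ gives a single space carrying all the solutions with one and the same driving $\alpha$-stable process $Y$, proving the remaining assertions.

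\textbf{Main obstacle.} The crux is the analytic step above: proving the gradient and Hölder estimates for $p^b$ — equivalently the $C^{1,\gamma}$-regularity of $G^b_\lambda f$ for $f\in C^\infty_c(\R^d)$ — from the sole hypothesis $|b|\in\bbK_{d,\alpha-1}$, and then rigorously justifying that $u=G^b_\lambda f$, which is neither compactly supported nor a priori smooth and which produces the possibly unbounded term $b\cdot\nabla u$, is a legitimate test function in the martingale problem. Both the convergence of the perturbation series and the Khasminskii-type controls hinge on the subcriticality $\alpha>1$ of $b\cdot\nabla$ and on $M^\alpha_{|b|}(r)\to0$ as $r\downarrow0$; this is where essentially all the work lies.
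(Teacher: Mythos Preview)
Your overall architecture matches the paper's: reduce the SDE to the martingale problem, obtain existence from the Feller process built out of $q^b$, and then prove uniqueness of the martingale problem via the $\lambda$-resolvent. The genuine divergence is in \emph{how} you get the resolvent identity for an arbitrary solution $\bfQ$. You propose to use the \emph{perturbed} resolvent $u=G^b_\lambda f$ as a test function, which forces you to prove $C^{1,\gamma}$-type regularity of $u$ (equivalently, gradient and H\"older estimates for $p^b$) and then to justify that this non-compactly-supported $u$ with possibly unbounded $b\cdot\nabla u$ is admissible. The paper avoids this entirely: it works only with the \emph{free} resolvent $R_\lambda$, for which the needed bounds $|\nabla r_\lambda(z)|\asymp |z|^{-(d+1-\alpha)}\wedge \lambda^{-2}|z|^{-(d+1+\alpha)}$ are explicit, proves the single identity $V_\lambda g=R_\lambda g(x)+V_\lambda(b\cdot\nabla R_\lambda g)$ for $g\in C^\infty_0$ and for $|g|\le c|b|$, and then iterates to $V_\lambda g=\sum_{k\ge0}R_\lambda(BR_\lambda)^k g(x)$ using the contraction bound $\|\nabla R_\lambda(bf)\|_\infty\le\tfrac12\|f\|_\infty$ for large $\lambda$. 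This buys the paper a proof that needs no regularity of $p^b$ beyond the comparison $p^b\asymp p$, whereas your route, while conceptually clean, front-loads nontrivial analytic work that is not obviously available for general $|b|\in\bbK_{d,\alpha-1}$.

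Two smaller points. First, before any resolvent computation the paper has to manufacture, from an arbitrary $\bfQ$, a solution with $\bbE_\bfQ\int_0^\infty \e^{-\lambda t}|b(X_t)|\,\rd t<\infty$; this is done by a splicing construction at a stopping time $S_N$ (not merely a Khasminskii bound, which applies to $\bbP_x$ but not to $\bfQ$). Your ``localize by stopping times'' remark points in the right direction but would need to be made into exactly this reduction. Second, for the passage from martingale solution to weak solution you define $Y_t=X_t-X_0-\int_0^t b(X_s)\,\rd s$ directly; the paper instead builds $Z$ from the jumps of $X^b$ via its L\'evy system, shows the continuous martingale part vanishes and the bounded-variation part is $\int_0^t b(X^b_s)\,\rd s$, and then identifies the characteristic function of $Z$. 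Your shortcut is fine once you know $\int_0^t|b(X_s)|\,\rd s<\infty$ a.s., which for the specific Feller solution follows from the heat-kernel comparison; just be aware that this is where the integrability is actually used.
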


The unique weak solutions of \eqref{e:1.8} form a strong
Markov process $X^b$.
Theorem \ref{T:1.2} combined with the main result
of \cite{BJ2007} and \cite{CKS2012a}
readily gives sharp two-sided estimates
on the transition density $p^b(t, x, y)$ of $X^b$ as well as on
the transition density $p^b_D(t, x, y)$
of the subprocess $X^{b,D}$ of $X^b$ killed upon leaving a bounded
$C^{1,1}$ open set.
We refer the definition of $C^{1,1}$ open set and its $C^{1,1}$ characteristics to \cite{CKS2012a}.
For $x\in D$, let $\delta_{D}(x)$ denote the Euclidean distance
between $x$ and $\partial D$. The diameter of $D$ will be denoted
as ${\rm diam}(D)$.

\begin{cor}\label{C:1.3}
{\rm (i)}
 $X^b$ has a jointly continuous transition density function $p^b(t, x, y)$
with respect to the Lebesgue measure on $\R^d$.
Moreover, for every  $T>0$, there is a constant $c_1>1$
depending only on $d$, $\alpha$, $T$ and
on $b$ only through the rate at which $ M^\alpha_{|b|}(r)$ goes to zero so that for  $(t, x, y)\in (0, T]\times \R^d \times \R^d$,
$$
c_1^{-1}\left( t^{-d/\alpha} \wedge
\frac{t}{|x-y|^{d+\alpha}}\right)  \le q^b (t, x, y) \le c_1 \left(
t^{-d/\alpha} \wedge \frac{t}{|x-y|^{d+\alpha}}\right).
$$

{\rm (ii)} Let $d\geq2$ and let $D$ be a bounded $C^{1,1}$ open
subset of $\, \R^d$ with $C^{1,1}$ characteristics $(R_0,
\Lambda_0)$. Define
$$
f_D(t, x, y)=\left( 1\wedge
\frac{\delta_D(x)^{\alpha/2}}{\sqrt{t}}\right) \left( 1\wedge
\frac{\delta_D(y)^{\alpha/2}}{\sqrt{t}}\right) \left( t^{-d/\alpha}
\wedge \frac{t}{|x-y|^{d+\alpha}}\right).
$$
For each $T>0$, there are  constants $c_2=c_2(T, R_0, \Lambda_0, d,
\alpha, \text{diam}(D), b) \geq 1$ and
$c_3=c_3(T, d, \alpha, D, b) \geq 1$
with the dependence on $b$ only through the rate at
which $ M^\alpha_{|b|}(r)$ goes to zero such that
\begin{description}
\item{\rm (a)}   on $(0, T]\times D\times D$,
$
c_2^{-1} f_D(t, x, y)  \leq p^{b}_D(t, x, y)\,
\leq c_2 f_D(t, x, y);
$
\item{\rm (b)}  on $[T,
\infty)\times D\times D$,
$$
c_3^{-1}\, \e^{-t \lambda_1^{b, D}}\, \delta_D (x)^{\alpha/2}\, \delta_D
(y)^{\alpha/2} \,\leq\,  p^{b}_D(t, x, y) \,\leq\, c_3 \,
\e^{-t \lambda_1^{b, D}}\, \delta_D (x)^{\alpha/2} \,\delta_D
(y)^{\alpha/2} ,
$$
where $\lambda_1^{b, D}:=- \sup {\rm Re} (\sigma(\sL^b|_D)) >0$.
Here $\sigma (\sL^b|_D)$ denotes the spectrum of the non-local operator
$\sL^b$ in $D$ with zero exterior condition.
\end{description}
\end{cor}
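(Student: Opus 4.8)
The plan is to deduce the corollary from Theorem \ref{T:1.2} by identifying $X^b$ with the Markov process whose transition density is constructed analytically in \cite{BJ2007} (on all of $\R^d$) and in \cite{CKS2012a} (for the killed process on a bounded $C^{1,1}$ domain), and then transferring their sharp estimates. The key point is that those references build a candidate heat kernel for $\sL^b$ by a perturbation (Duhamel) series off the free stable kernel and prove the stated two-sided bounds, but the probabilistic meaning of their analytic construction is supplied precisely by the well-posedness of the martingale problem established in Theorem \ref{T:1.2}.

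For part (i), I would first recall from \cite{BJ2007} that under $|b|\in\bbK_{d,\alpha-1}$ there is a jointly continuous function $p^b(t,x,y)$ on $(0,\infty)\times\R^d\times\R^d$ which is a fundamental solution of $\partial_t u=\sL^b u$ and satisfies exactly the two-sided global estimate asserted in (i); moreover the operators $T_tf(x)=\int_{\R^d}p^b(t,x,y)f(y)\,\rd y$ form a conservative Feller semigroup and hence determine a strong Markov process $\wt X$ with $\wt X_0=x$. Next I would verify that $\wt X$ solves the martingale problem for $(\sL^b,C^\infty_c(\R^d))$: for $f\in C^\infty_c(\R^d)$ the fundamental-solution property yields $\partial_t T_tf=\sL^b T_tf$, and a Dynkin-type computation then shows that $M^f_t$ in \eqref{e:1.7} is a $\bbP_x$-martingale. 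By the uniqueness half of Theorem \ref{T:1.2}, the law of $\wt X$ coincides with that of $X^b$, so $p^b(t,x,y)$ is the transition density of $X^b$ and the bound in (i) follows.

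For part (ii), the killed process $X^{b,D}$ obtained by killing $X^b$ upon exiting the bounded $C^{1,1}$ open set $D$ has transition density $p^b_D$, recovered from $p^b$ by Hunt's formula $p^b_D(t,x,y)=p^b(t,x,y)-\bbE_x[p^b(t-\tau_D,X^b_{\tau_D},y);\,\tau_D<t]$, where $\tau_D$ is the exit time from $D$. I would then invoke the Dirichlet heat kernel estimates of \cite{CKS2012a} for $\sL^b$ on bounded $C^{1,1}$ domains: their construction produces a killed heat kernel satisfying (a) on $(0,T]\times D\times D$ and, after analysis of the ground state and the leading spectral value, the large-time two-sided bound (b) on $[T,\infty)\times D\times D$ with $\lambda_1^{b,D}=-\sup\re(\sigma(\sL^b|_D))>0$. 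By the same identification as in part (i), the killed density of \cite{CKS2012a} is exactly $p^b_D$, so (a) and (b) transfer to $X^{b,D}$.

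The main obstacle is the identification step rather than the estimates, which are quoted: one must check that the analytically constructed semigroups of \cite{BJ2007} and \cite{CKS2012a} are genuinely associated with the SDE solution $X^b$. This reduces to the martingale property of $M^f_t$ under the law determined by $p^b$, whose delicate point is to justify $\partial_t T_tf=\sL^b T_tf$ up to $t=0$ and to control the singular gradient term in $\sL^b T_tf$ uniformly enough to pass to the martingale, using only $|b|\in\bbK_{d,\alpha-1}$. Two further caveats are that part (ii) is restricted to $d\ge 2$ and requires the $C^{1,1}$ structure, and that, because $b\cdot\nabla$ breaks self-adjointness, $\lambda_1^{b,D}$ must be read off from the spectrum of the non-self-adjoint generator (via Krein--Rutman / principal-eigenvalue theory) rather than from a Rayleigh quotient.
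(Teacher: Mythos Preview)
Your proposal is correct and follows essentially the same route as the paper. The paper's proof is a two-line argument: Theorem~\ref{T:2.2} identifies $X^b$ with the Feller process determined by the kernel $q^b(t,x,y)$ of Proposition~\ref{P:2.1}, and then the estimates in (i) and (ii) are read off directly from Proposition~\ref{P:2.1} and \cite[Theorem~1.3]{CKS2012a}. Your write-up unpacks this identification in more detail, but the ``main obstacle'' you flag---showing that the analytically constructed semigroup solves the martingale problem---is not something you need to redo: it is exactly the content of \cite[Theorem~2.5]{CKS2012a}, which the paper already records just after Proposition~\ref{P:2.1}, so no fresh Dynkin-type justification of $\partial_t T_t f=\sL^b T_t f$ is required here.
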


Here and in the sequel,  for $a$, $b\in \R$, $a\wedge b:=\min \{a, b\}$,
$a\vee b:=\max\{a, b\}$, and the meaning of the phrase
``depending on $b$ only via the rate at which $M^\alpha_{|b|}(r)$
goes to zero'' is that the statement is true
for any $\R^d$-valued function $\wt  b$ on $\R^d$ with
$M^\alpha_{|\wt  b|} (r)\le M^\alpha_{|b|}(r)$
for all  $r>0$.

\medskip

The existence of martingale solution to $(\sL^b, C_c^\infty (\R^d))$
is established in \cite[Theorem 2.5]{CKS2012a}, using
a (particular)
fundamental solution of $\sL^b$ constructed in \cite{BJ2007}.
One deduces easily from Ito's formula that
 the uniqueness of the martingale problem for $(\sL^b, C^\infty_c(\R^d))$
implies the weak uniqueness of SDE \eqref{e:1.8}.
So the main point of Theorem \ref{T:1.2} is on the
uniqueness of the martingale problem for $(\sL^b, C^\infty_c(\R^d))$ and the
 existence of a weak solution to SDE \eqref{e:1.8}.
The novelty here is that the drift $b$ is a function in Kato class
$\bbK_{d, \alpha-1}$, which in general is merely measurable and can be unbounded.
Thus Picard's iteration method is not applicable either.
Motivated by the approach in \cite{BC2003}, we establish the
uniqueness of the solutions to the martingale problem for $(\sL^b, C^\infty_c(\R^d))$
by showing that its resolvent is uniquely determined.
This uniqueness of the martingale problem for $(\sL^b, C^\infty_c(\R^d))$
in particular gives the uniqueness of the fundamental solution to
$\sL^b$, which was not addressed in \cite{BJ2007}; see Theorem \ref{T:2.2}
below.

The equivalence between
weak solutions to SDE driven by Brownian motion and solutions to
martingale problems for elliptic operators is well known. The crucial
ingredient in
this connection is a martingale representation theorem for Brownian motion.
Such a martingale representation theorem is not available
for stable processes.
Recently, Kurtz \cite{K} studied equivalence between weak solutions
to a class of SDEs driven by Poisson random measures
and solutions to martingale problems for a class of  non-local operators
 using a non-constructive approach.
We point out that one can not deduce the existence of weak solution to
SDE \eqref{e:1.8} from the existence of the martingale problem for $(\sL^b, C^\infty_c(\R^d))$
by applying results from \cite{K} because $\sL^b f$ is typically
unbounded for $f\in C^\infty_c(\R^d)$.
In this paper, we develop a new approach to the weak existence
of solutions to  SDE \eqref{e:1.8}.
We believe this new approach is potentially useful to
study weak existence for some other SDEs with singular drifts, especially
those driven by discontinuous L\'evy processes.
Our new approach uses  the L\'evy system of the strong Markov process $X^b$
obtained  from the unique solution to the martingale
problem for $(\sL^b, C^\infty_c(\R^d))$ and stochastic calculus to
construct a weak solution to SDE \eqref{e:1.8}.

\medskip

Very recently, around the same time as the first version of this paper
was completed,
Kim and Song \cite{KS} studied stable process
with singular drift,
analogous to Brownian motion with singular drift studied
in Bass and Chen \cite{BC2003}.
Intuitively speaking, stable process with singular drift studied in \cite{KS}
corresponding to SDE \eqref{e:1.8} with $b$ being replaced by suitable measure.
However, the existence and uniqueness of the solution
in \cite{KS} is formulated in a weaker sense, as in \cite{BC2003}.
When applying to the Kato function $b$ case considered in this paper,
the results in \cite{BC2003} do not give the existence and uniqueness
of weak solutions to SDE \ref{e:1.8} nor the well-posedness of the martingale
problem for $(\sL^b, C^\infty_c (\R^d))$.

\medskip

The approach of this paper is quite robust.
It can be applied to
study some other stochastic models. For example, it can be used
to establish, for each $b=(b_1, \cdots, b_d)\in \bbK_{d, 1}$,
the well-posedness of martingale problem for $(\Delta
+b \cdot \nabla, C^\infty_c(\R^d))$ and to establish
the weak existence and uniqueness of solutions to Brownian motion
with singular drift: $\rd X_t =\rd B_t +b(X_t)\rd t$.
The rest of the paper is organized as follows.
The proof of uniqueness of the martingale problem is given in Section \ref{S:2},
while the proof of Theorem \ref{T:1.2} and Corollary \ref{C:1.3} are presented in Section \ref{S:3}.

\section{Uniqueness of martingale problem}\label{S:2}

Recall that
$\sL^b=\Delta^{\alpha/2}+b\cdot \nabla$.
When $b=0$, we simply write $\sL^0$ as $\sL$; that is, $\sL = \Delta^{\alpha/2}$.
Let  $b=(b_1, \cdots , b_d)$ be a $\R^d$-valued
function on $\R^d$ with $|b|\in \bbK_{d, \alpha-1}$.
For simplicity, sometimes we just denote it as $b\in \bbK_{d, \alpha-1}$.
In this section, we establish the well-posedness of the martingale
problem for $(\sL^b,C_c^{\infty}(\R^d))$.

\medskip

We first recall from Bogdan and Jakubowski \cite{BJ2007}
the construction of {\it a particular} fundamental solution $q^b(t, x, y)$
for non-local operator $\sL^b$ using a perturbation argument.
It is based on the following heuristics: $q^b(t, x, y)$ of $\sL^b$ can be related to
the fundamental solution $p(t, x, y)$ of $\sL$, which is  the transition density of the symmetric stable process $Y$,
 by the following Duhamel's formula:
\begin{equation}\label{e:1.1c}
 q^b (t, x, y)
=p (t, x, y)+
\int^t_0\int_{\R^d}q^b (s, x, z)\, b(z)\cdot\nabla_z p (t-s, z, y)
\rd z \rd s.
\end{equation}
Applying the above formula recursively, one expects   $q^b(t, x,
y)$ to be expressed as an infinite series in terms of $p$ and its
derivatives. Thus we define
$q^b_0(t, x, y)= p(t, x, y)$ and for $k\geq 1$,
\begin{equation}\label{e:1.2}
q^b_k (t, x, y):= \int_0^t \int_{\R^d} q^b_{k-1} (s, x, z)\, b(z) \cdot
\nabla_z p(t-s, z, y) \rd z.
\end{equation}

The following results come from  \cite[Theorem 1, Lemma 15, Lemma
23]{BJ2007} and their proofs.

\begin{prop}\label{P:2.1}
\begin{description}
\item{\rm (i)} There exist constants $T_0>0$ and $c_1>1$
depending
only on $d$, $\alpha$ and
on $b$ only through the rate at which $ M^\alpha_{|b|}(r)$ goes to zero so that $\sum_{k=0}^\infty q^b_k(t, x, y)$
converges locally uniformly on $(0, T_0]\times \R^d \times \R^d$ to a
jointly continuous positive function $q^b(t, x, y)$ and that on $(0,
T_0]\times \R^d \times \R^d$,
    \begin{equation}\label{e:1.5}
c_1^{-1}\left( t^{-d/\alpha} \wedge
\frac{t}{|x-y|^{d+\alpha}}\right)  \le q^b (t, x, y) \le c_1 \left(
t^{-d/\alpha} \wedge \frac{t}{|x-y|^{d+\alpha}}\right).
\end{equation}
Moreover, $\int_{\R^d} q^b(t, x, y) \rd y=1$ for every $t\in (0, T_0]$ and
$x\in \R^d$.

\item{\rm (ii)}  The function $q^b(t, x, y)$ defined in  (i)  can be extended
 uniquely to a jointly continuous positive  function on
 $(0, \infty) \times \R^d \times \R^d$
so that for all $s$, $ t\in (0, \infty)$
and $x$, $y\in \R^d$,
$\int_{\R^d} q^b(t, x, z) dz=1$ and
\begin{equation}\label{e:1.6}
q^b(s+t,x,y)=\int_{\R^d} q^b(s,x,z)q^b(t,z,y) \rd z.
\end{equation}

\item{\rm (iii)} Define
$ T^b_tf(x):= \int_{\R^d} q^b(t,x,y) f(y)\rd y$.
Then for any $f$, $g \in C^\infty_c(\R^d)$, the space of smooth
functions with compact supports,
$$
\lim_{t \downarrow 0} \frac1{t}
\int_{\R^d}  (T^b_tf(x)-f(x)) g(x)\rd x
= \int_{\R^d}(\sL^b f)(x)\, g(x)\rd x.
$$
\end{description}
\end{prop}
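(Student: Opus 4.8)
\medskip
\noindent\emph{Proof plan.}
The plan is to follow the Bogdan--Jakubowski perturbation construction: build $q^b$ as the sum of the series $\sum_{k\ge 0}q^b_k$ from \eqref{e:1.2} and reduce everything to a single estimate for one application of the operator
$$(\mathcal{T}h)(t,x,y):=\int_0^t\int_{\R^d}h(s,x,z)\,b(z)\cdot\nabla_z p(t-s,z,y)\,\rd z\,\rd s .$$
Throughout I would set $\rho_t(x):=t^{-d/\alpha}\wedge\big(t\,|x|^{-d-\alpha}\big)$ and use the classical facts for the symmetric $\alpha$-stable kernel: $p(t,x,y)\asymp\rho_t(x-y)$; the gradient bound $|\nabla_z p(t,z,y)|\le C\,(t^{1/\alpha}+|z-y|)^{-1}\rho_t(z-y)$; and the convolution bound $\int_{\R^d}\rho_s(x-z)\rho_{t-s}(z-y)\,\rd z\le C\,\rho_t(x-y)$.

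For part (i), the decisive ingredient is a \emph{Kato-class $3P$ inequality}: there should be a nondecreasing function $\kappa$ with $\kappa(0+)=0$, depending on $b$ only through $M^\alpha_{|b|}$, such that $|h(s,x,z)|\le M\rho_s(x-z)$ for all $s\le t$ and all $x,z$ implies $|(\mathcal{T}h)(t,x,y)|\le M\kappa(t)\rho_t(x-y)$ for all $t\in(0,1]$. Granting this, since $|q^b_0|=p\le c\rho_t$, induction gives $|q^b_k(t,x,y)|\le c\,\kappa(t)^k\rho_t(x-y)$. Choosing $T_0$ with $\kappa(T_0)<\tfrac12(1+c^2)^{-1}$, the series converges absolutely and locally uniformly on $(0,T_0]\times\R^d\times\R^d$; summing the geometric majorant gives the upper bound in \eqref{e:1.5}, while $q^b\ge p-\sum_{k\ge 1}|q^b_k|\ge\tfrac12 c^{-1}\rho_t$ gives the lower bound and positivity (after renaming $c_1$). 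Joint continuity of $q^b$ follows from that of each $q^b_k$ (dominated convergence in the defining integral, majorized by the above bounds) plus the local uniform convergence, and conservativeness follows from $\int_{\R^d}\nabla_z p(t-s,z,y)\,\rd y=\nabla_z\int_{\R^d}p(t-s,z,y)\,\rd y=0$, which forces $\int_{\R^d}q^b_k(t,x,y)\,\rd y=0$ for $k\ge 1$ and hence $\int_{\R^d}q^b(t,x,y)\,\rd y=\int_{\R^d}p(t,x,y)\,\rd y=1$. This $3P$ inequality is the technical heart of the argument and the main obstacle: one must split according to which of $|x-z|$, $|z-y|$ is small and whether the $s$-integral concentrates near $0$ or near $t$, and balance the singularity of $\nabla_z p$ against the convolution bound so as to extract from $\int_{\R^d}|b(z)|(\cdots)\,\rd z$ a genuine factor $M^\alpha_{|b|}(r)$ with $r\downarrow 0$; the naive majorants ($\rho_{t-s}\le(t-s)^{-d/\alpha}$, or $|\nabla_z p|\le(t-s)^{-1/\alpha}\rho_{t-s}$) fail because they leave a non-integrable $(t-s)^{-1}$ in the time integral.

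For part (ii), I would first upgrade \eqref{e:1.6} to $s+t\le T_0$ by proving $q^b_n(s+t,x,y)=\sum_{j+k=n}\int_{\R^d}q^b_j(s,x,z)q^b_k(t,z,y)\,\rd z$ by induction on $n$ (base case: Chapman--Kolmogorov for $p$; inductive step: insert \eqref{e:1.2}, use Chapman--Kolmogorov for $p$ together with $\int_{\R^d}\nabla_{z'}p(u,z',z)\,p(v,z,y)\,\rd z=\nabla_{z'}p(u+v,z',y)$, and split the time integral at the middle time) and then summing over $n$. Next I would extend $q^b$ to $(0,\infty)$ by declaring $T^b_t:=(T^b_{t/n})^n$ whenever $t/n\le T_0$; the semigroup identity on $(0,T_0]$ makes this independent of $n$, and the kernel of $(T^b_{t/n})^n$ is an $n$-fold spatial convolution of copies of $q^b(t/n,\cdot,\cdot)$, so it is jointly continuous, positive, of total mass $1$, and satisfies \eqref{e:1.6} on all of $(0,\infty)$.

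For part (iii), decompose $T^b_tf=\sum_{k\ge 0}T^{(k)}_tf$ with $T^{(k)}_tf(x):=\int_{\R^d}q^b_k(t,x,y)f(y)\,\rd y$. The term $k=0$ is the stable semigroup $P_t$, and $t^{-1}(P_tf-f)\to\Delta^{\alpha/2}f$ uniformly for $f\in C^\infty_c(\R^d)$, giving the $\Delta^{\alpha/2}f$ contribution. For $k=1$, using $\int_{\R^d}\nabla_z p(t-s,z,y)f(y)\,\rd y=\nabla(P_{t-s}f)(z)$ gives $T^{(1)}_tf=\int_0^t P_s\big(b\cdot\nabla P_{t-s}f\big)\,\rd s$, so by self-adjointness of $P_s$,
$$\frac1t\int_{\R^d}T^{(1)}_tf(x)\,g(x)\,\rd x=\frac1t\int_0^t\int_{\R^d}b(z)\cdot\nabla(P_{t-s}f)(z)\,(P_sg)(z)\,\rd z\,\rd s\ \longrightarrow\ \int_{\R^d}\big(b\cdot\nabla f\big)(z)\,g(z)\,\rd z,$$
the limit being justified by dominated convergence: $\nabla(P_{t-s}f)\to\nabla f$ and $P_sg\to g$ locally uniformly, while $|b(z)\cdot\nabla(P_{t-s}f)(z)\,(P_sg)(z)|$ is bounded, uniformly in $0<s<t\le 1$, by a fixed $L^1(\rd z)$ function (here $|b|\in\bbK_{d,\alpha-1}$ is locally integrable, and $\nabla P_uf$, $P_sg$ decay away from $\supp f$, $\supp g$). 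For the remainder, write $\sum_{k\ge 2}q^b_k=\mathcal{T}\big(\sum_{k\ge 1}q^b_k\big)$; integrating out $y$ first (producing the bounded function $\nabla P_{t-s}f$) and $x$ next (producing $h_s(z):=\int_{\R^d}(\sum_{k\ge 1}q^b_k(s,x,z))g(x)\,\rd x$, which satisfies $|h_s(z)|\le C\kappa(s)\|g\|_\infty\,\psi(z)$ for a weight $\psi\in L^1+L^\infty$ with $\int_{\R^d}|b|\,\psi<\infty$) gives $\big|\int_{\R^d}\int_{\R^d}(\sum_{k\ge 2}q^b_k(t,x,y))f(y)g(x)\,\rd y\,\rd x\big|\le C'\int_0^t\kappa(s)\,\rd s=o(t)$, so this part drops out in the limit. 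Adding the $k=0$ and $k=1$ contributions yields $\int_{\R^d}(\Delta^{\alpha/2}f+b\cdot\nabla f)g=\int_{\R^d}(\sL^bf)\,g$, as required.
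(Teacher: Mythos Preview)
The paper does not actually prove this proposition: it is quoted from \cite{BJ2007}, with the explicit attribution ``The following results come from \cite[Theorem~1, Lemma~15, Lemma~23]{BJ2007} and their proofs.'' So there is no proof in the paper to compare against; your proposal is effectively a sketch of the Bogdan--Jakubowski argument itself.

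As such a sketch, your outline is sound and tracks the structure of \cite{BJ2007}: the single-step contraction estimate $|\mathcal{T}h|\le \kappa(t)M\rho_t$ for inputs dominated by $M\rho_s$ is precisely the content of \cite[Lemma~13]{BJ2007} (and the Kato-class input enters via \cite[Lemma~11, Corollary~12]{BJ2007}, which is exactly the balancing you describe between the gradient singularity and the $3P$ convolution bound). You are right to flag this step as the technical heart and to note that the naive majorants fail; in \cite{BJ2007} this is handled by bounding $\int_0^t\!\int_{\R^d}\rho_s(x-z)|b(z)|\,|\nabla_z p(t-s,z,y)|\,\rd z\,\rd s$ through a spatial $3P$-type inequality that converts the $|b|$-integral into $M^\alpha_{|b|}(t^{1/\alpha})$ rather than attempting a time integration first. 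Your treatment of conservativeness, of Chapman--Kolmogorov via the term-by-term identity $q^b_n(s+t)=\sum_{j+k=n}q^b_j(s)\ast q^b_k(t)$, and of the generator limit by isolating the $k=0$ and $k=1$ summands is also in line with \cite{BJ2007}. One small point in your remainder estimate for (iii): the bound you need on $\int_{\R^d}|b(z)|\,|h_s(z)|\,\rd z$ follows most cleanly by writing $|h_s(z)|\le C\kappa(s)\int_{\R^d}\rho_s(x-z)|g(x)|\,\rd x$ and then using that $\sup_x\int_{\R^d}|b(z)|\rho_s(x-z)\,\rd z$ is bounded on $(0,T_0]$ by the Kato condition, rather than trying to exhibit an $s$-independent weight $\psi$.
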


\medskip

  Proposition \ref{P:2.1}(iii) indicates
$q^b(t,x,y)$ is a fundamental solution of $\sL^b$ in
distributional sense.
It is  easy to check  (see
\cite[Proposition 2.3]{CKS2012a})
that the   operators $\{T^b_t; t\geq 0\}$ determined by $q^b(t, x, y)$ form a Feller semigroup and
so there exists a $\R^d$-valued  conservative Feller process
$ \{ X_t, t\geq 0, \bbP_x, x\in \R^d\}$ defined on the canonical
 Skorokhod space $\bbD ([0, \infty); \R^d)$ having $q^b(t, x, y)$
 as its transition density function.
Moreover, it is shown in \cite[Theorem 2.5]{CKS2012a}
that $ \bbP_x$ is a solution to    the martingale problem
for  $(\sL^b, C^\infty_c(\R^d))$ with $\bbP_x (X_0=x)=1$.
However in both \cite {BJ2007} and \cite{CKS2012a}, neither the uniqueness
 of  fundamental solution $q^b(t, x, y)$ to $\sL^b$
 nor the uniqueness of the martingale problem for $(\sL^b, C_c^\infty (\R^d))$ are addressed. The main result of this paper, Theorem \ref{T:1.2}, in particular fills
 in this  missing piece and implies that  $q^b(t, x, y)$
 is the transition density function of
 the uniqueness solution $(X, \bbP_x, x\in \R^d)$
to the martingale problem for $(\sL^b, C^\infty_c(\R^d))$.

\medskip

\begin{thm}\label{T:2.2} For each $x\in \R^d$,
 the martingale problem for $(\sL^b, C^\infty_c(\R^d ))$
with initial value $x$ is well-posed.
These martingale
problem solutions $\{ \bbP_x, x\in \R^d\}$ form
a strong Markov process $X^b$,
which has infinite lifetime and possesses a jointly
continuous transition density function $p^b(t, x, y)$ with
respect to the Lebesgue measure on $\R^d$.
Consequently,   $p^b(t, x, y)$ is the same
as the kernel $q^b(t, x, y)$
constructed in Proposition \ref{P:2.1} and
enjoys the two-sided estimates \eqref{e:1.5}.
\end{thm}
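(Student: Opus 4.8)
Existence of a solution to the martingale problem for $(\sL^b,C^\infty_c(\R^d))$ from each $x$ is already provided by \cite[Theorem 2.5]{CKS2012a}: the Feller process $(X,\bbP_x)$ with transition density $q^b$ from Proposition \ref{P:2.1} is such a solution. The plan is therefore to prove uniqueness; once that is in hand, the family $\{\bbP_x\}$ is automatically strong Markov (as usual for a well-posed time-homogeneous martingale problem), the unique solution from $x$ is $\bbP_x$ and so has the jointly continuous transition density $q^b$ obeying \eqref{e:1.5}, and infinite lifetime is automatic since solutions live on $\bbD([0,\infty);\R^d)$ (paths there are bounded on compact time intervals, so the exit times $\tau_R:=\inf\{t:|X_t|\ge R\}$ satisfy $\tau_R\uparrow\infty$). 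By the standard reduction for martingale problems --- regular conditional probabilities of a solution given $\calF_s$ are again solutions, started from $X_s$ --- it suffices to show that for each $x$ all solutions $\bfQ$ of the martingale problem from $x$ have the same resolvent $R^{\bfQ}_\lambda f(x):=\bbE_{\bfQ}\big[\int_0^\infty \e^{-\lambda t}f(X_t)\,\rd t\big]$ on $C^\infty_c(\R^d)$, for $\lambda$ large enough.

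The first and central step is an a priori Khasminskii-type bound: \emph{there are $\lambda_0\ge1$ and, for each $t>0$, a constant $C(t)$ depending on $b$ only through $M^\alpha_{|b|}(\cdot)$, such that $\bbE_{\bfQ}\big[\int_0^t|b(X_s)|\,\rd s\big]\le C(t)$ for every solution $\bfQ$ started at any point.} I would follow the scheme of Bass and Chen \cite{BC2003}. Applying the martingale problem to $C^\infty_c$ functions agreeing with the coordinate maps on large balls first gives $\int_0^{t\wedge\tau_R}|b(X_s)|\,\rd s<\infty$ $\bfQ$-a.s., whence $A_t:=\int_0^t|b(X_s)|\,\rd s<\infty$ a.s. for all $t$. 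Next, for nonnegative smooth bounded $g$ with $M^\alpha_g(r)\le M^\alpha_{|b|}(2r)$, put $h_g:=U^0_{\lambda_0}g$, where $U^0_\lambda$ is the resolvent of $Y$. Since $\alpha>1$ one has $\nabla u^0_\lambda\in L^1(\R^d)$ and, by scaling, $\|\nabla u^0_\lambda\|_{L^1}=\lambda^{(1-\alpha)/\alpha}\|\nabla u^0_1\|_{L^1}\to0$ as $\lambda\to\infty$; together with the Kato property this yields $\|h_g\|_\infty\le C_0$ and $\|\nabla h_g\|_\infty\le\theta<1$ for $\lambda_0$ large, uniformly in such $g$ (this is the estimate underpinning the construction in \cite{BJ2007}). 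As $h_g\in C^\infty_b(\R^d)$ with $\Delta^{\alpha/2}h_g=\lambda_0 h_g-g$, a cut-off argument extends the martingale problem to $h_g$, so $h_g(X_t)-h_g(X_0)-\int_0^t(\lambda_0 h_g-g+b\cdot\nabla h_g)(X_s)\,\rd s$ is a $\bfQ$-local martingale. Evaluating at $t\wedge\sigma_k$, where $\sigma_k:=\inf\{t:A_t\ge k\}$, and taking expectations gives
\[
\bbE_{\bfQ}\Big[\int_0^{t\wedge\sigma_k}g(X_s)\,\rd s\Big]\le(2+\lambda_0 t)C_0+\theta\,\bbE_{\bfQ}\Big[\int_0^{t\wedge\sigma_k}|b(X_s)|\,\rd s\Big];
\]
letting $g\to|b|$ and using that the left side is $\le k<\infty$, one rearranges to bound $\bbE_{\bfQ}[\int_0^{t\wedge\sigma_k}|b(X_s)|\,\rd s]$ uniformly in $k$, and then $k\to\infty$ (with $\sigma_k\uparrow\infty$) gives the claim.

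The second step identifies $R^{\bfQ}_\lambda f$. Using the same smallness of $U^0_\lambda$ on the Kato class, the perturbation series $u:=\sum_{k\ge0}u_k$, $u_0=U^0_\lambda f$, $u_{k+1}=U^0_\lambda(b\cdot\nabla u_k)$, converges in $C^1_b(\R^d)$ for $\lambda$ large and defines $u=u^{(\lambda,f)}\in C_0(\R^d)$, \emph{independent of $\bfQ$}, with $(\lambda-\Delta^{\alpha/2})u=f+b\cdot\nabla u$; accordingly we set $\sL^b u:=\lambda u-f$, which is bounded. Replacing $b$ by smooth bounded $b_n$ with $|b_n|\le|b|$ and $b_n\to b$ a.e., the analogous series $u^{(n)}$ lies in $C^\infty_b(\R^d)$, satisfies $\sL^{b_n}u^{(n)}=\lambda u^{(n)}-f$, and obeys $u^{(n)}\to u$, $\nabla u^{(n)}\to\nabla u$ boundedly pointwise. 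The cut-off-extended martingale problem applies to $u^{(n)}$ directly, and by the a priori bound $M^{u^{(n)}}_t:=u^{(n)}(X_t)-u^{(n)}(X_0)-\int_0^t\sL^b u^{(n)}(X_s)\,\rd s$ is a true $\bfQ$-martingale, where $\sL^b u^{(n)}=(\lambda u^{(n)}-f)+(b-b_n)\cdot\nabla u^{(n)}$. Letting $n\to\infty$, with $\bbE_{\bfQ}[\int_0^t|b-b_n|(X_s)\,\rd s]\to0$ by dominated convergence (legitimate by the a priori bound), shows that $M^u_t:=u(X_t)-u(X_0)-\int_0^t(\lambda u-f)(X_s)\,\rd s$ is a $\bfQ$-martingale. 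Hence $\bbE_{\bfQ}[\e^{-\lambda t}u(X_t)]-u(x)=-\bbE_{\bfQ}[\int_0^t\e^{-\lambda s}f(X_s)\,\rd s]$, and $t\to\infty$ (using $\|u\|_\infty<\infty$) yields $R^{\bfQ}_\lambda f(x)=u(x)$, which does not depend on $\bfQ$. This proves uniqueness of the one-dimensional marginals, and with it the theorem.

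The step I expect to be the main obstacle is the a priori Khasminskii-type estimate --- and, relatedly, the need to run the martingale problem for test functions outside $C^\infty_c(\R^d)$ whose image under $\sL^b$ is merely of Kato class. Both rest essentially on $\alpha>1$ (so that $\nabla U^0_\lambda$ maps $\bbK_{d,\alpha-1}$ boundedly into $L^\infty$, with small norm for large $\lambda$) and on the vanishing of $M^\alpha_{|b|}(r)$ as $r\downarrow0$; the requisite gradient bounds on $u^0_\lambda$ (and on $q^b$) are available from \cite{BJ2007}.
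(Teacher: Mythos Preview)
Your overall strategy --- show that for large $\lambda$ the $\bfQ$-resolvent of any solution equals the Neumann series $\sum_{k\ge0}R_\lambda(BR_\lambda)^kf(x)$, hence does not depend on $\bfQ$ --- is exactly the paper's (Steps~(ii)--(v) of Theorem~\ref{T:2.5}, following Bass--Chen~\cite{BC2003}). The substantive difference is in how one arranges that $\int_0^t|b(X_s)|\,\rd s$ is $\bfQ$-integrable, which underlies every limiting argument in both your Steps~1 and~2.

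There is a gap in your a priori Khasminskii bound: the step ``letting $g\to|b|$'' (and later ``$\bbE_{\bfQ}[\int_0^t|b-b_n|(X_s)\,\rd s]\to0$'') presupposes that the occupation measure $A\mapsto\bbE_{\bfQ}\big[\int_0^{t\wedge\sigma_k}\id_A(X_s)\,\rd s\big]$ is absolutely continuous with respect to Lebesgue measure. Smooth $g_n$ can approximate the merely measurable $|b|$ only $m$-a.e., and nothing so far prevents a general $\bfQ$ from charging the exceptional null set. The paper confronts the same issue (its Step~(iii)) but \emph{avoids the a priori bound entirely} via a splicing trick (Step~(i)): build $\wt\bfQ$ that agrees with $\bfQ$ on $\calF_{S_N}$ --- where $S_N$ is a stopping time by which, from your own coordinate-function argument, $\int_0^{S_N}|b(X_s)|\,\rd s\le Nd$ --- and afterwards follows the \emph{already-constructed} good solution $\bbP_{X_{S_N}}$. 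Since $\bbP_y$ has transition density $q^b$, $\wt\bfQ$ automatically satisfies \eqref{e:2.6}; proving $\wt\bfQ=\bbP_x$ for each $N$ then yields $\bfQ=\bbP_x$. Your route is repairable --- running your $h_g$ identity with $g=\psi_n\in C^\infty_c$, $0\le\psi_n\le2$, $\psi_n\to0$ $m$-a.e., $\liminf\psi_n\ge\id_A$, and using $\int_0^{t\wedge\sigma_k}|b(X_s)|\,\rd s\le k$ as dominator gives $\mu^{t,k}_{\bfQ}(A)=0$, after which your limits are legitimate --- but as written the passage $g\to|b|$ is unjustified, and the paper's splicing is shorter precisely because it exploits the existence of $\bbP_x$ rather than trying to bound an arbitrary $\bfQ$ from scratch.
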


Here $X^b_t=X_t$ is the coordinate map defined on $\bbD([0, \infty); \R^d)$ but we use superscript $b$ for emphasis when it is viewed as a Markov
process under probability measures $\bbP_x.$

Let $(\bbP_x, x\in \R^d)$ be the probability measures
on $\bbD ([0, \infty); \R^d)$ obtained from
the kernel $q^b(t, x, y)$ in Proposition \ref{P:2.1}.
The mathematical expectation taken under
 $\bbP_x$ will be denoted by $\bbE_x$.
As we noted in previous paragraph,
for each $x\in \R^d$, $\bbP_x$ solves the martingale problem
for $(\sL^b, C^\infty_c(\R^d))$ with initial value $x$.
We will show that $\bbP_x$ is in fact the
unique solution.
Our approach is motivated by that of
Bass and Chen \cite[Section~5]{BC2003}.

\medskip

Before the proof of Theorem \ref{T:2.2} we state two lemmas on the boundedness of the $\lambda$-resolvent
operator $R_\lambda$ corresponding to
symmetric  $\alpha$-stable process $Y$.
Denote by $p(t, x, y)=p(t, x-y) $ the transition density function of $Y$.
Let
$r_\lambda(x)=\int_0^\infty \e^{-\lambda t} p(t,x) \rd t$ and define
the resolvent operator $R_\lambda$ by
\[ R_\lambda g(x) = \int_{\R^d} r_\lambda(x-y) g(y) \rd y =
\int_{\R^d} r_\lambda(y) g(x-y) \rd y, \]
for every $g\in C_b(\R^d)$ and $x\in \R^d$.
Here $C_b (\R^d)$ (resp. $C_0(\R^d)$) denote the space
of bounded continuous functions on $\R^d$ (resp. continuous
functions on $\R^d$ that vanish at infinity).
For $f\in C_b(\R^d)$, define $\Vert f\Vert_\infty=\sup_{x\in \R^d}
|f(x)|$. Denote by   $C_0^\infty(\R^d)$    the space of
smooth functions on $\R^d$ that together
with their partial derivatives of any order vanish at infinity.
By \cite[Lemma 7]{BJ2007}, for $(\lambda, x)\in (0, \infty)\times \R^d$,
\begin{equation}\label{e:2.1}
 r_\lambda (x) \asymp \left( \lambda^{(d-\alpha)/\alpha}
\vee |x|^{\alpha -d}\right) \wedge \left(\lambda^{-2}|x|^{-d-\alpha}\right),
\end{equation}
 which can be rewritten as
\[ r_\lambda(x) \asymp
\begin{cases}
\frac{1}{|x|^{d-\alpha }} \wedge \frac{\lambda^{-2}}{|x|^{d+\alpha}}
\qquad &\text{when } d>\alpha , \cr
\lambda^{(d-\alpha)/2} \wedge \frac{\lambda^{-2}}{|x|^{d+\alpha}}
&\text{when } d\leq \alpha .
\end{cases}
\]
Here for any two positive functions $f$ and $g$,
$f\asymp g$ means that there is a positive constant $c\geq 1$
so that $c^{-1}\, g \leq f \leq c\, g$ on their common domain of
definition.

\begin{lemma} \label{L:2.3}
  For every $\lambda>0$, $\nabla R_\lambda$ is a
  bounded operator on $C_0(\R^d)$. Moreover, $R_\lambda
  f\in C_0^\infty(\R^d)$ for every $f\in C_0^\infty(\R^d)$.
\end{lemma}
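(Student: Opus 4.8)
The plan is to realize $R_\lambda$ as convolution against the resolvent kernel $r_\lambda$, to prove that $r_\lambda$ and $\nabla r_\lambda$ both belong to $L^1(\R^d)$, and then to read off both assertions from standard facts about convolution with integrable kernels. The hypothesis $\alpha>1$ will be used only in the integrability of $\nabla r_\lambda$ near the origin, and that is the one genuine obstacle.

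First I would note that $r_\lambda\ge 0$ with $\int_{\R^d}r_\lambda(x)\,\rd x=\int_0^\infty \e^{-\lambda t}\,\rd t=1/\lambda<\infty$, using $\int_{\R^d}p(t,x)\,\rd x=1$; hence $R_\lambda g=r_\lambda * g$, and convolution of an $L^1(\R^d)$ function with a $C_0(\R^d)$ function is again in $C_0(\R^d)$ (continuity by dominated convergence, decay at infinity by splitting the $y$-integral into $|y|\le M$ and $|y|>M$), so $R_\lambda$ maps $C_0(\R^d)$ into itself. For the gradient, the scaling identity $p(t,x)=t^{-d/\alpha}p(1,t^{-1/\alpha}x)$ gives $\nabla_x p(t,x)=t^{-(d+1)/\alpha}(\nabla p)(1,t^{-1/\alpha}x)$; since $\nabla p(1,\cdot)\in L^1(\R^d)$ (a routine property of the isotropic stable density), the change of variables $z=t^{-1/\alpha}x$ yields $\int_{\R^d}|\nabla_x p(t,x)|\,\rd x=c_0 t^{-1/\alpha}$ with $c_0:=\|\nabla p(1,\cdot)\|_{L^1}$. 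Differentiating $r_\lambda(x)=\int_0^\infty \e^{-\lambda t}p(t,x)\,\rd t$ under the integral sign (legitimate for $x\ne 0$, where $p(t,\cdot)$ is smooth) and integrating in $x$ by Tonelli,
\[
\int_{\R^d}|\nabla r_\lambda(x)|\,\rd x\ \le\ \int_0^\infty \e^{-\lambda t}\Big(\int_{\R^d}|\nabla_x p(t,x)|\,\rd x\Big)\rd t\ =\ c_0\int_0^\infty \e^{-\lambda t}\,t^{-1/\alpha}\,\rd t\ =\ c_0\,\Gamma\!\Big(1-\tfrac{1}{\alpha}\Big)\lambda^{(1-\alpha)/\alpha},
\]
which is finite precisely because $1/\alpha<1$. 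Moreover $r_\lambda\in W^{1,1}(\R^d)$: it is $C^1$ off the origin with an $L^1$ gradient, and the boundary terms arising when one integrates by parts over $\{|x|=\delta\}$ are $O(\delta^{\alpha-1})\to 0$ as $\delta\downarrow 0$, so its classical and distributional gradients coincide.

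With $\nabla r_\lambda\in L^1(\R^d)$ in hand, for $g\in C_0(\R^d)$ I would differentiate under the integral with the derivative falling on $r_\lambda$, obtaining $\nabla R_\lambda g=(\nabla r_\lambda)*g\in C_0(\R^d)$ with $\|\nabla R_\lambda g\|_\infty\le\|\nabla r_\lambda\|_{L^1}\|g\|_\infty$; this is the first assertion. For the second, since $R_\lambda$ commutes with translations, for $f\in C_0^\infty(\R^d)$ one shows by induction on $|\beta|$ that $\partial^\beta(R_\lambda f)$ exists and equals $R_\lambda(\partial^\beta f)$ (differentiate under the integral, the derivative now falling on the smooth bounded function $f$); as $\partial^\beta f\in C_0^\infty(\R^d)\subset C_0(\R^d)$ and $R_\lambda$ maps $C_0(\R^d)$ into itself, $\partial^\beta(R_\lambda f)\in C_0(\R^d)$ for all $\beta$, i.e.\ $R_\lambda f\in C_0^\infty(\R^d)$.

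The main obstacle is the bound $\nabla r_\lambda\in L^1(\R^d)$, and this is precisely where $\alpha\in(1,2)$ is essential: the time integral $\int_0^\infty \e^{-\lambda t}t^{-1/\alpha}\,\rd t$ converges only when $\alpha>1$ (equivalently, the pointwise bound $|\nabla r_\lambda(x)|\lesssim|x|^{\alpha-1-d}$ near the origin is integrable only for $\alpha>1$). The remaining ingredients — the self-mapping of $C_0(\R^d)$ by the resolvent, the interchange of differentiation and convolution, and the $L^1$-bound on $\nabla p(1,\cdot)$ — are routine properties of the isotropic $\alpha$-stable semigroup.
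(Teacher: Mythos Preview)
Your proof is correct and follows essentially the same architecture as the paper's: establish that $r_\lambda$ and $\nabla r_\lambda$ lie in $L^1(\R^d)$, then read off both assertions from convolution. The one substantive difference is in how the bound $\nabla r_\lambda\in L^1(\R^d)$ is obtained. The paper quotes the pointwise two-sided estimate
\[
|\nabla r_\lambda(z)|\ \asymp\ \frac{1}{|z|^{d+1-\alpha}}\wedge\frac{1}{\lambda^2|z|^{d+1+\alpha}}
\]
from \cite[Lemma~9]{BJ2007} and integrates it directly, whereas you derive the $L^1$-norm from scaling, $\|\nabla p(t,\cdot)\|_{L^1}=c_0 t^{-1/\alpha}$, and a Gamma integral in $t$. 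Your route is more self-contained and makes the role of $\alpha>1$ very transparent (the convergence of $\int_0^\infty \e^{-\lambda t}t^{-1/\alpha}\rd t$); the paper's route, on the other hand, records the pointwise estimate \eqref{e:2.2}, which is not needed for this lemma but is used repeatedly later (Lemma~\ref{L:2.4} and steps (iii)--(iv) of the proof of Theorem~\ref{T:2.5}). The treatment of the second assertion---letting derivatives fall on $f$ via $\partial^\beta(R_\lambda f)=R_\lambda(\partial^\beta f)$---is identical in both.
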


\begin{proof}
It is known by \cite[Lemma 9]{BJ2007} that $r_\lambda (z)$ is continuously
  differentiable off the origin and   there is a constant $c_1 >1$ so that for every $\lambda >0$ and $z\neq0$,
  \begin{equation}\label{e:2.2}
  c_1^{-1} \left(\frac{1}{|z|^{d+1-\alpha}} \wedge
   \frac{1}{\lambda^2 |z|^{d+1+\alpha}}\right)
    \leq | \nabla r_\lambda (z)| \leq c_1 \left( \frac{1}{|z|^{d+1-\alpha}} \wedge \frac{1}{\lambda^2 |z|^{d+1+\alpha}}\right).
   \end{equation}
It follows that for $\lambda>0$, $f\in C_0(\R^d)$ and $x\in \R^d$,
  \[ \int_{\R^d} |\nabla r_\lambda(x-y)|\, |f(y)| \rd y \leq c_1 \Vert
  f\Vert_\infty \int_{\R^d} \left(\frac{1}{|z|^{d-\alpha+1}}\wedge
    \frac{1}{\lambda^2 |z|^{d+\alpha+1}}\right) \rd z <\infty. \]
  Thus $R_\lambda f$ is continuously differentiable and
  \[ \nabla R_\lambda f(x) = \int_{\R^d} \nabla r_\lambda(x-y)
  f(y) \rd y =
  \int_{\R^d} \nabla r_\lambda(y) f(x-y) \rd y. \]
  Since both $r_\lambda (y)$ and $|\nabla r_\lambda(y)|$
  are integrable over $\R^d$ and
  $f(x-y)$ converges to $0$ as $|x|\to\infty$, we conclude that
  both $R_\lambda f$ and $ \nabla R_\lambda f$ are in $C_0(\R^d)$ with $\Vert R_\lambda f\Vert_\infty \leq c_2 \Vert f\Vert_\infty$ and $\Vert\nabla R_\lambda f\Vert_\infty \leq c_2 \Vert f\Vert_\infty$
   for some constant $c_2>0$.
Similarly,
for $f\in C^\infty_0(\R^d)$,
we have
  $$  \partial^{k_1}_{x_1} \cdots \partial^{k_d}_{x_d}  R_\lambda f (x)
      = \int_{\R^d} r_\lambda (y)\, \partial^{k_1}_{x_1} \cdots \partial^{k_d}_{x_d} f(x-y)\, \rd y ,
  $$
and consequently
$R_\lambda f\in C^\infty_0(\R^d)$.
\end{proof}

We may view the function $b$ as a multiplication operator in the sense
that $(b f)(x)= b(x) f(x)$.
\begin{lemma} \label{L:2.4}
  Let $b=(b_1, \cdots, b_d)\in \bbK_{d,\alpha-1}$.
  There exists $\lambda_0>0$
  depending only on $d$, $\alpha$   and on $b$ only via the rate at which $M^\alpha_{|b|}(r)$ goes to zero
such that for every
  $\lambda > \lambda_0$ and $f\in C_0(\R^d)$,
  \[ \Vert \nabla R_\lambda (b f) \Vert_\infty \leq \frac{1}{2} \Vert
  f\Vert_\infty. \]
\end{lemma}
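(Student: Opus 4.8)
The plan is to estimate $\|\nabla R_\lambda(bf)\|_\infty$ by writing, for $x \in \R^d$,
\[
  |\nabla R_\lambda(bf)(x)|
  \;\le\; \int_{\R^d} |\nabla r_\lambda(x-y)|\,|b(y)|\,|f(y)|\,\rd y
  \;\le\; \|f\|_\infty \int_{\R^d} |\nabla r_\lambda(x-y)|\,|b(y)|\,\rd y,
\]
so everything reduces to showing that
\[
  \sup_{x\in\R^d}\int_{\R^d} |\nabla r_\lambda(x-y)|\,|b(y)|\,\rd y
  \;\longrightarrow\; 0 \qquad \text{as } \lambda\to\infty .
\]
I would split this integral at a radius $r>0$: the near part over $B(x,r)$ and the far part over $\R^d\setminus B(x,r)$.

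For the near part, I use the upper bound in \eqref{e:2.2}, namely $|\nabla r_\lambda(z)|\le c_1 |z|^{-(d+1-\alpha)}$ (discarding the $\lambda^{-2}$ factor, which only helps), to get
\[
  \int_{B(x,r)} |\nabla r_\lambda(x-y)|\,|b(y)|\,\rd y
  \;\le\; c_1 \int_{B(x,r)} \frac{|b(y)|}{|x-y|^{d+1-\alpha}}\,\rd y
  \;\le\; c_1\, M^\alpha_{|b|}(r),
\]
which is exactly the Kato-class quantity from \eqref{eq:kato}. Since $|b|\in\bbK_{d,\alpha-1}$, I may first fix $r=r_0>0$ small enough that $c_1 M^\alpha_{|b|}(r_0) \le 1/4$; note this choice of $r_0$ depends on $b$ only through the rate at which $M^\alpha_{|b|}(r)$ tends to $0$, as required.

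For the far part, with $r_0$ now fixed, I use the other branch of the upper bound in \eqref{e:2.2}, $|\nabla r_\lambda(z)| \le c_1 \lambda^{-2} |z|^{-(d+1+\alpha)}$ for $|z|\ge r_0$. A mild subtlety is that $|b|$ need not be globally integrable, but membership in $\bbK_{d,\alpha-1}$ forces at most polynomial growth of local mass: covering $\R^d\setminus B(x,r_0)$ by dyadic annuli $r_0 2^{j} \le |x-y| < r_0 2^{j+1}$ and bounding the $b$-mass of each annulus by a constant times $(2^j)^{d}\, \sup_z M^\alpha_{|b|}(\text{fixed})$ (or more simply $M^\alpha_{|b|}(r_0)$ times a geometric covering count), the weight $|z|^{-(d+1+\alpha)}$ makes the sum over $j\ge 0$ converge, giving
\[
  \int_{\R^d\setminus B(x,r_0)} |\nabla r_\lambda(x-y)|\,|b(y)|\,\rd y
  \;\le\; \frac{c_4}{\lambda^2},
\]
with $c_4$ depending on $d,\alpha,r_0$ and on $b$ only via $M^\alpha_{|b|}$. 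Now choose $\lambda_0$ so that $c_4/\lambda_0^2 \le 1/4$; then for all $\lambda>\lambda_0$ the two parts sum to at most $1/2$, and combining with the displayed reduction gives $\|\nabla R_\lambda(bf)\|_\infty \le \tfrac12\|f\|_\infty$.

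The main obstacle is the far-part estimate: one must control $\int |\nabla r_\lambda|\,|b|$ over an unbounded region despite $|b|$ being possibly unbounded and not integrable, so the argument genuinely uses that the Kato condition limits the growth of the local $L^1$-mass of $b$ (via a covering/dyadic-annulus argument), together with the strong decay $|z|^{-(d+1+\alpha)}$ of $\nabla r_\lambda$ away from the origin; the $\lambda^{-2}$ gain there is what produces the smallness in $\lambda$. The near-part estimate, by contrast, is immediate from the definition of $\bbK_{d,\alpha-1}$ once the gradient bound \eqref{e:2.2} is in hand.
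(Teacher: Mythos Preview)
Your argument is correct and follows the same basic strategy as the paper: reduce to bounding $\sup_x\int|\nabla r_\lambda(x-y)|\,|b(y)|\,\rd y$ via the gradient estimate \eqref{e:2.2}, then use the Kato condition for the near part and the $\lambda^{-2}|z|^{-(d+1+\alpha)}$ decay for the far part. The dyadic-annulus argument you sketch for the far part goes through: each annulus $\{r_02^j\le|x-y|<r_02^{j+1}\}$ can be covered by $O(2^{jd})$ balls of radius $r_0$, and on each such ball $\int_{B(z,r_0)}|b|\le r_0^{d+1-\alpha}M^\alpha_{|b|}(r_0)$, so the weighted sum is $O(\lambda^{-2}r_0^{-2\alpha}M^\alpha_{|b|}(r_0))$.

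The paper proceeds slightly differently: rather than fixing $r_0$ first and then sending $\lambda\to\infty$, it invokes \cite[Lemma~11 and Corollary~12]{BJ2007} (recorded as \eqref{e:2.3}) to obtain directly
\[
  \sup_{x}\int_{\R^d}\left(\frac{1}{|x-y|^{d+1-\alpha}}\wedge\frac{\lambda^{-2}}{|x-y|^{d+1+\alpha}}\right)|b(y)|\,\rd y
  \;\le\; c\,M^\alpha_{|b|}(\lambda^{-1/\alpha}),
\]
i.e., the near/far split is taken at the $\lambda$-dependent scale $\lambda^{-1/\alpha}$ where the two branches of \eqref{e:2.2} cross, yielding the single clean bound $\|\nabla R_\lambda(bf)\|_\infty\le c\,M^\alpha_{|b|}(\lambda^{-1/\alpha})\|f\|_\infty$. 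Your two-step choice (first $r_0$, then $\lambda_0$) is a self-contained substitute for that cited lemma; the paper's route is shorter but relies on the external reference, while yours is more elementary and makes the dependence on $b$ equally transparent.
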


\begin{proof}
It follows from  \cite[Lemma 11 and Corollary 12]{BJ2007} and their proof (with $\beta=2$ there) that
there exists a constant $c_1>0$ depending only on $d$ and $\alpha$
such that for every $t >0$,
\begin{equation}\label{e:2.3}
\sup_{x\in\R^d} \int_{\R^d} \left( \frac{1}{|x-y|^{d+1-\alpha}}
\wedge \frac{t^2}{|x-y|^{d+1+\alpha}}\right) |b(y)| \rd y
 \leq c_1 M^\alpha_{|b|} (t^{1/\alpha}) .
 \end{equation}
 This together with \eqref{e:2.2} implies that there
  exists a constant $c_2>0$ such that for every $\lambda>0$
  \begin{align*}
    \sup_{x\in\R^d} \int_{\R^d} |\nabla
    r_\lambda(x-y)|\,
  |b(y)| \, |f(y)| \rd y & \leq c_2 \Vert f\Vert_\infty \sup_{x\in
    \R^d}\int_{\R^d} \left(\frac{1}{|x-y|^{d-\alpha+1}}\wedge
    \frac{\lambda^{-2}}{|x-y|^{d+\alpha+1}}\right) |b(y)| \rd
  y \\
  & \leq  c_1 c_2 \Vert f\Vert_\infty M_{|b|}^\alpha(\lambda^{-1/\alpha}).
  \end{align*}
    It follows that
  \[ \Vert \nabla R_\lambda(bf) \Vert_\infty \leq c_3
  M_{|b|}^\alpha(\lambda^{-1/\alpha}) \Vert f\Vert_\infty. \]
  Since $M_{|b|}^\alpha(\lambda^{-1/\alpha})$ tends to $0$ as
  $\lambda\to\infty$, there exists some $\lambda_0>0$ so that $c_3 M_{|b|}^\alpha(\lambda^{-1/\alpha}) \leq 1/2$ for
  every $\lambda>\lambda_0$. This proves the lemma.
\end{proof}

It is well known that the transition density function $p(t, x,y)$ of the symmetric $\alpha$-stable process $Y$ on $\R^d$ has the two-sided estimates
$$ p(t, x, y) \asymp t^{-d/\alpha} \wedge \frac{t}{|x-y|^{d+\alpha}}.
$$
 So estimate \eqref{e:1.5} can be restated as
there is a constant $C_1\geq 1$  depending on $b$
only through the rate at which $ M^\alpha_{|b|}(r)$ goes to zero
so that
$$ C_1^{-1} p(t, x, y) \leq p^b(t, x, y) \leq C_1 p(t, x, y)
\qquad \text{for every } (t, x, y)\in (0, T_0]\times \R^d \times \R^d.
$$
It follows from \eqref{e:1.5} and the Chapman-Kolmogorov equation \eqref{e:1.6} that
there are positive constants $C_2\geq 1$ and $C_3>0 $
  depending on $b$
only through the rate at which $ M^\alpha_{|b|}(r)$ goes to zero
so that
\begin{equation}\label{e:2.4}
C_2^{-1} \e^{-C_3t} p(t, x,y)\leq p^b(t, x, y) \leq C_2 \e^{C_3 t}
p(t, x, y) \qquad \text{for every } t>0 \text{ and } x,\ y\in \R^d.
\end{equation}
Thus for $\lambda >C_3$,
$$ \bbE_x\left[ \int_0^\infty \e^{-\lambda t} |b(X_t)| \rd t
  \right] \leq C_2  \int_{\R^d} |b(y)| r_{\lambda-C_3} (x-y) \rd y  .
$$
By \cite[Lemma 16]{BJ2007},
there is a constant $C_4>C_3$ depending on $b$
only through the rate at which $ M^\alpha_{|b|}(r)$ goes to zero
such that for every $\lambda \geq C_4$,
\begin{equation}\label{e:2.5}
\sup_{x\in \R^d} R_\lambda |b| (x) =
 \sup_{x\in \R^d}  \bbE_x\left[ \int_0^\infty
 \e^{-\lambda t} |b(X_t)| \rd t \right] <\infty .
\end{equation}
By increasing the value of $\lambda_0$ in Lemma \ref{L:2.4}
if needed, we may and do assume
that $\lambda_0\geq C_4$.

\begin{thm}  \label{T:2.5}
  For each $x\in\R^d$, $\bbP_x$ is the unique solution to
  the martingale problem for $(\sL^{b},C_c^\infty(\R^d))$ with
  initial value $x$.
\end{thm}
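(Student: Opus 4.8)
The plan is to follow the resolvent-uniqueness strategy of Bass and Chen, showing that any solution $\bfQ_x$ to the martingale problem for $(\sL^b, C_c^\infty(\R^d))$ with initial value $x$ must have the same resolvent $\bbE_{\bfQ_x}\big[\int_0^\infty \e^{-\lambda t} f(X_t)\,\rd t\big]$ as $\bbP_x$, for all $f$ in a suitably rich class and all large $\lambda$; since the resolvent determines the finite-dimensional distributions, this forces $\bfQ_x = \bbP_x$. First I would fix $\lambda > \lambda_0$ with $\lambda_0$ as in Lemma~\ref{L:2.4} (so in particular $\lambda_0 \geq C_4$ and \eqref{e:2.5} holds), and take an arbitrary $g \in C_c^\infty(\R^d)$. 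The idea is to solve the ``resolvent equation'' $(\lambda - \sL^b) u = g$ in a weak/mild sense: writing $u = R_\lambda g + R_\lambda(b\cdot\nabla u)$ and iterating, one has formally $u = \sum_{k\geq 0} R_\lambda\big((b\cdot\nabla) R_\lambda\big)^k g$. By Lemma~\ref{L:2.3}, $R_\lambda$ maps $C_0^\infty$ into $C_0^\infty$ and $\nabla R_\lambda$ is bounded on $C_0$; by Lemma~\ref{L:2.4}, the operator $h \mapsto \nabla R_\lambda(bh)$ has norm $\leq 1/2$ on $C_0(\R^d)$ for $\lambda > \lambda_0$. Hence the series $\sum_k (\nabla R_\lambda b)^k (\nabla R_\lambda g)$ converges in $C_0(\R^d)$; calling its sum $v \in C_0(\R^d)$ and setting $u := R_\lambda g + R_\lambda(b\cdot v)$, one checks $\nabla u = v$ and therefore $u$ solves $u = R_\lambda g + R_\lambda(b\cdot \nabla u)$ with $u, \nabla u \in C_0(\R^d)$ and $\nabla u$ bounded. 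Morally this says $(\lambda - \sL)u = g + b\cdot\nabla u$, i.e. $\lambda u - \sL^b u = g$.

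The next step is to feed such a $u$ into the martingale problem. Here the subtlety is that $u$ is not in $C_c^\infty(\R^d)$, so I cannot directly apply the definition of the martingale problem to $u$; I first need to justify that Dynkin-type formula $\bbE_{\bfQ_x}[\e^{-\lambda t} u(X_t)] - u(x) = \bbE_{\bfQ_x}\big[\int_0^t \e^{-\lambda s}(\sL^b u - \lambda u)(X_s)\,\rd s\big]$ remains valid for this $u$. This is done by approximation: since $\sL^b f = \sL f + b\cdot\nabla f$ and the martingale property extends from $C_c^\infty$ first to $C_0^\infty$ and then, via the estimate \eqref{e:2.5} controlling $\bbE_x[\int_0^\infty \e^{-\lambda s}|b(X_s)|\,\rd s]$ (which, by \eqref{e:2.4}, transfers to any martingale-problem solution $\bfQ_x$ once one knows $\bfQ_x$ has the right occupation-time bounds — or more carefully, one establishes the needed resolvent bound for $\bfQ_x$ a priori from the martingale problem itself as in \cite{BC2003}), to functions like $u$ whose $\sL^b u$ is integrable against $\e^{-\lambda s}\,\rd s$. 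Letting $t\to\infty$ and using $u, \sL^b u \in C_0$ plus dominated convergence gives
\[
u(x) = \bbE_{\bfQ_x}\Big[\int_0^\infty \e^{-\lambda s}\big(\lambda u - \sL^b u\big)(X_s)\,\rd s\Big] = \bbE_{\bfQ_x}\Big[\int_0^\infty \e^{-\lambda s} g(X_s)\,\rd s\Big].
\]
The right-hand side is the $\lambda$-resolvent of $\bfQ_x$ applied to $g$, while the left-hand side $u(x)$ does not depend on which solution $\bfQ_x$ we chose. In particular it equals $\bbE_{\bbP_x}[\int_0^\infty \e^{-\lambda s} g(X_s)\,\rd s]$, since $\bbP_x$ is itself a solution. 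Thus all martingale-problem solutions share the same $\lambda$-resolvent for every $g\in C_c^\infty(\R^d)$ and every $\lambda > \lambda_0$; by a standard density and uniqueness-of-Laplace-transform argument this pins down $\bbE_{\bfQ_x}[g(X_t)]$ for a.e. $t$ and all such $g$, hence all one-dimensional marginals, and then the Markov property built into the martingale formulation upgrades this to equality of all finite-dimensional distributions, so $\bfQ_x = \bbP_x$.

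The main obstacle I anticipate is the a priori resolvent estimate needed to make the approximation in the second step rigorous: to apply the Dynkin formula to the unbounded-ish object $u$ (really to $b\cdot\nabla u$, whose only control is $|b|\in\bbK_{d,\alpha-1}$ times a bounded gradient) under an \emph{arbitrary} solution $\bfQ_x$, one needs $\bbE_{\bfQ_x}[\int_0^t \e^{-\lambda s}|b(X_s)|\,\rd s] < \infty$ with a bound uniform enough to pass limits. Unlike for $\bbP_x$ — where \eqref{e:2.4}–\eqref{e:2.5} give this directly from the heat-kernel bounds — for a general $\bfQ_x$ one must extract it from the martingale problem itself. The standard device (cf. \cite[Section~5]{BC2003}) is a localization/stopping-time argument: apply the martingale property to cutoff functions $u_n \in C_c^\infty$ agreeing with $u$ on large balls, stop at exit times $\tau_n$ of those balls, use the two-sided bound on $q^b$ to control $\bbE_{\bfQ_x}$-occupation measure on compacts, and take $n\to\infty$ with monotone convergence. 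Carrying out this localization cleanly — in particular verifying that the boundary/exterior contributions from the nonlocal operator $\sL$ acting on the cutoffs vanish in the limit, which is where the nonlocality genuinely complicates the Brownian argument — is the technical heart of the proof.
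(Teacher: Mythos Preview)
Your overall strategy is the paper's: both follow the Bass--Chen resolvent-uniqueness scheme, aiming at the series representation $\sum_{k\ge0} R_\lambda(BR_\lambda)^k g$ (with $B=b\cdot\nabla$) and using Lemma~\ref{L:2.4} for convergence, then invoking uniqueness of the Laplace transform plus a regular-conditional-probability argument to pass from one-dimensional marginals to $\bfQ=\bbP_x$. Two points in your execution diverge from the paper in ways that matter.

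\medskip

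\textbf{The a priori bound under a general $\bfQ$.} Your proposed route --- ``use the two-sided bound on $q^b$ to control $\bbE_{\bfQ_x}$-occupation measure on compacts'' --- is circular: the estimates \eqref{e:2.4}--\eqref{e:2.5} are for $\bbP_x$, not for an unknown solution $\bfQ$. The paper does not try to establish the bound for $\bfQ$ directly. It instead exploits the fact, built into the definition of a martingale-problem solution, that $\int_0^t |b(X_s)\cdot\nabla f(X_s)|\,\rd s<\infty$ $\bfQ$-a.s.\ for each $f\in C_c^\infty$. Choosing $f^{(i)}_n\in C_c^\infty$ with $f^{(i)}_n(x)=x_i$ on $B(0,n)$ and stopping at suitable $S_n$ gives $\bbE_\bfQ\big[\int_0^{S_n}|b(X_s)|\,\rd s\big]\le nd$. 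One then \emph{splices}: define $\widetilde\bfQ$ to agree with $\bfQ$ up to $S_N$ and to follow $\bbP_{X_{S_N}}$ thereafter. This $\widetilde\bfQ$ is again a solution, equals $\bfQ$ on $\calF_{S_N}$, and now the global bound $\bbE_{\widetilde\bfQ}\big[\int_0^\infty \e^{-\lambda t}|b(X_t)|\,\rd t\big]<\infty$ holds because the post-$S_N$ piece is controlled by \eqref{e:2.5}. Proving $\widetilde\bfQ=\bbP_x$ for each $N$ yields $\bfQ=\bbP_x$.

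\medskip

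\textbf{Applying Dynkin to the limit $u$.} Your packaging ``construct $u=\sum_k R_\lambda(BR_\lambda)^k g$, then plug $u$ into the Dynkin formula'' needs $\sL^b u$, hence $\Delta^{\alpha/2}u$, to make sense. But $u$ is only known to lie in $C_0$ with $\nabla u\in C_0$; the terms $R_\lambda(b\cdot v)$ are $\alpha$-potentials of merely Kato-class data, and you have no pointwise control on $\Delta^{\alpha/2}u$. The paper avoids ever forming $\sL$ of the limit. It works with the identity
\[
V_\lambda g = R_\lambda g(x) + V_\lambda(BR_\lambda g),
\]
first established for $g\in C_0^\infty$ (where $f=R_\lambda g\in C_0^\infty$ by Lemma~\ref{L:2.3} and $(\lambda-\Delta^{\alpha/2})f=g$ is legitimate), and then \emph{extended to all $g$ with $|g|\le c|b|$} by approximation: mollify a truncation $g_M$, and pass to the limit using a separately proved fact that $V_\lambda(\id_A)=0$ for Lebesgue-null $A$ (this absolute-continuity step is itself deduced from \eqref{e:2.9} and is not automatic for a general $\bfQ$). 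Only once the identity holds for this larger class can one iterate --- since $BR_\lambda g$ satisfies $|BR_\lambda g|\le c|b|$ but is not in $C_0^\infty$ --- and the remainder $V_\lambda\big(B(R_\lambda B)^N R_\lambda g\big)$ tends to $0$ by Lemma~\ref{L:2.4} combined with the now-available bound on $V_\lambda|b|$.
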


\begin{proof} Let $\bfQ$ be any solution to the martingale problem for
$(\sL^{b}, C^\infty_c(\R^d))$ with initial value $x$.
We will show $\bfQ=\bbP_x$. We divide its proof into 5 steps.

(i) We show that it suffice to consider the case that
\begin{equation}
  \label{e:2.6}
  \bbE_\bfQ\left[ \int_0^\infty \e^{-\lambda t}|b(X_t)| \rd t \right]
  <\infty \quad \text{for every } \lambda>\lambda_0,
\end{equation}
where $\bbE_\bfQ$ is the mathematical
expectation under the probability measure $\bfQ$ and
 $\lambda_0$ is the constant in Lemma \ref{L:2.4}.

 By the definition of martingale problem solution,
$  \int_0^t b(X_s)\cdot\nabla f(X_s) \rd s$
is well defined $\bfQ$-a.s. for each $t>0$, that is,
$\int_0^t |b(X_s)\cdot \nabla f(X_s)| \rd s<\infty$
$\bfQ$-a.s. for every $t>0$.
Let
$$
T_n (f) = \inf \left\{t>0: \int_0^t |b(X_s)\cdot \nabla f(X_s)| \rd s \geq n \right\}.
$$
Then $\{T_n (f), n\geq 1\}$ is an increasing sequence of stopping times
 such that
$\lim_{n\to\infty} T_n (f) =\infty$ $\bf Q$-a.s. with
\begin{equation}
  \label{e:2.7}
  \bbE_\bfQ\left[ \int_0^{T_n} |b(X_s)\cdot \nabla f(X_s)| \rd s\right] \leq  n.
\end{equation}

Choose a sequence of functions $f_n^{(i)}\in C_c^\infty(\R^d)$ such
that $f_n^{(i)}(x)=x_i$ for $x\in B(0,n)$ and $1\leq i \leq d$.
Define
\[
 S_n = \left( \min_{1\leq i\leq d} T_n (f_n^i)\right)
  \wedge \inf\left\{t:\ |X_t| > n \text{ or }
   |X_{t-}| > n \right\}.
   \]
Then $S_n$ is an increasing sequence of stopping times with
$\lim_{n\to\infty} S_n = \infty$. By \eqref{e:2.7},
\begin{equation}
  \label{e:2.8}
  \bbE_\bfQ\left[ \int_0^{S_n} |b(X_s)| \rd s \right]
  \leq \sum_{i=1}^d \bbE_\bfQ\left[ \int_0^{S_n} |b_i(X_s)|
  \rd s \right] \leq \sum_{i=1}^d \bbE_\bfQ\left[\int_0^{S_n} | b(X_s)\cdot\nabla f_n^{(i)}(X_s)| \rd s\right]
  \leq n d.
\end{equation}

Now we construct a new probability measure $\widetilde \bfQ$ so that
$\widetilde
\bfQ$ is also a solution to the martingale problem for $(\sL^{b}, C_c^\infty (\R^d))$
and that for every $\lambda>\lambda_0$,
\[
\bbE_{\widetilde \bfQ} \left[ \int_0^\infty \e^{-\lambda t} |b(X_t)| \rd t
\right] <\infty.
 \]
Let $\calF_t$ be the minimal filtration generated by $\{X_s; s\leq t\}$.
Fix $N\geq 1$. We specify $\widetilde \bfQ$ by
\[ \widetilde \bfQ \left(B\bigcap (C\circ \theta_{S_N})\right) =
\bbE_\bfQ \left[\bbP_{X_{S_N}}(C);\ B\right], \]
for $B\in \calF_{S_N}$ and $C\in \calF_\infty$. It
is easy to see that $\widetilde \bfQ$ is again a solution to
the martingale problem for $(\sL^{b}, C^\infty_c (\R^d))$. Moreover,
\[ \bbE_{\widetilde \bfQ}\left[ \int_0^\infty \e^{-\lambda t}|b(X_t)|
  \rd t \right]= \bbE_\bfQ\left[ \int_0^{S_N} \e^{-\lambda t} |b(X_t)|
  \rd t\right] + \bbE_{\bfQ}\left[\e^{-\lambda
    S_N}\bbE_{X_{S_N}}\left[\int_0^\infty \e^{-\lambda t}|b(X_t)|\rd
    t\right]\right],  \]
which is finite by \eqref{e:2.5} and \eqref{e:2.8}. Since
$\wt \bfQ=\bfQ$ on $\calF_{S_N}$, if we can show $\wt \bfQ=\bbP_x$,
then this would imply $\bfQ= \bbP_x$ on $\calF_{S_N}$. Since $N\geq 1$
 is arbitrary, this would imply that $\bfQ=\bbP_x$
on $\calF_\infty$. So it suffice
to consider the solution $\bfQ$ to the martingale problem satisfying
\eqref{e:2.6}.

\medskip

(ii) We next show that for every $g\in C^\infty_0 (\R^d)$,
\begin{equation}
  \label{e:2.9}
  \bbE_\bfQ\left[\int_0^\infty \e^{-\lambda t} g(X_t)\rd t\right] =
  R_\lambda g(x) + \bbE_\bfQ\left[\int_0^\infty \e^{-\lambda t}
    b(X_t)\cdot\nabla R_\lambda g(X_t) \rd t\right].
\end{equation}

By \eqref{e:1.7}, $f(X_t)$ is a semimartingale under $\bfQ$ for
every $f\in C_c^\infty(\R^d)$. It follows by the It\^o's formula that
\begin{align*} &\e^{-\lambda t} f(X_t)= f(X_0) + \int_0^t
  \e^{-\lambda
  s} \rd M_s^f + \int_0^t \e^{-\lambda s} \left(\Delta^{\alpha/2} f(X_s) +
b(X_s)\cdot
\nabla f(X_s)\right)\rd s - \lambda \int_0^t \e^{-\lambda s}
f(X_s) \rd s.
\end{align*}
Taking expectation with respect to $\bfQ$, we have
\begin{equation}
  \label{e:2.10}
  \bbE_\bfQ[\e^{-\lambda t}f(X_t)]= f(x)- \bbE_\bfQ\left[ \int_0^t
  \e^{-\lambda s}(\lambda f-\Delta^{\alpha/2} f)(X_s)\rd s\right] +
\bbE_\bfQ\left[
  \int_0^t \e^{-\lambda s}b(X_s)\cdot \nabla f(X_s)\rd s\right].
\end{equation}
Note that $f$, $\nabla f$ and $\Delta^{\alpha/2} f$ are all bounded. Taking limit
$t\to\infty$ in both sides of \eqref{e:2.10} and using the
fact \eqref{e:2.6}, we obtain
\begin{equation}
  \label{e:2.11}
  \bbE_\bfQ\left[\int_0^\infty \e^{-\lambda t}(\lambda f-\Delta^{\alpha/2}
    f)(X_t)\rd t \right] = f(x) + \bbE_\bfQ\left[\int_0^\infty
    \e^{-\lambda t}
    b(X_t)\cdot\nabla f(X_t)\rd t \right].
\end{equation}
We want to show that \eqref{e:2.11} holds for all $f\in
C_0^\infty(\R^d)$. In fact, for any $f\in
C_0^\infty(\R^d)$, there exists a sequence of functions $f_n \in
C_c^\infty(\R^d)$ such that $f_n\to f$ in $C_0^\infty(\R^d)$ and
in particular, $\Vert f_n-f\Vert_\infty\to0$, $\Vert \Delta^{\alpha/2}
f_n -\Delta^{\alpha/2} f\Vert_\infty\to0$, $\Vert
\nabla f_n - \nabla f\Vert_\infty\to0$. Applying \eqref{e:2.6} again,
we have
\[ \lim_{n\to\infty} \bbE_\bfQ\left[\int_0^\infty \e^{-\lambda t}(\lambda f_n - \Delta^{\alpha/2}
  f_n)(X_t)\rd t\right] = \bbE_\bfQ\left[\int_0^\infty
  \e^{-\lambda t}(\lambda f-\Delta^{\alpha/2} f)(X_t)\rd t \right], \]
and
\[ \lim_{n\to\infty} \bbE_\bfQ\left[\int_0^\infty
  \e^{-\lambda t}
  b(X_t)\cdot\nabla f_n(X_t)\rd t \right] =
\bbE_\bfQ\left[\int_0^\infty \e^{-\lambda t}
  b(X_t)\cdot\nabla f(X_t)\rd t \right]. \]
Thus \eqref{e:2.11} holds for $f\in C_0^\infty(\R^d)$.

By Lemma \ref{L:2.3}, $R_\lambda g\in C_0^\infty(\R^d)$ for
$g\in C_0^\infty(\R^d)$. Taking $f=R_\lambda g$ in
\eqref{e:2.11} and using the fact $(\lambda -\Delta^{\alpha/2}) R_\lambda
g=g$, we obtain \eqref{e:2.9}.

\medskip

(iii) We claim that
\begin{equation}\label{e:2.12}
  \bbE_\bfQ\left[\int_0^\infty \e^{-\lambda t} \id_A(X_t)\rd t\right] =0
 \qquad \text{for any } A\subset \R^d \text{ with } m(A)=0,
\end{equation}
where $m$ is the Lebesgue measure on $\R^d$.

To see this, suppose $A$ is a bounded subset of $\R^d$
having $m(A)=0$. Let $\psi_n$ be a sequence of positive
functions in $C^\infty_c(\R^d)$ so that $|\psi_n|\leq 2$,
$\lim_{n\to \infty}\psi_n=0$
$m$-a.e. on $\R^d$ and $\lim_{n\to \infty}\psi_n\geq \id_A$.
It follows from  \eqref{e:2.2} and the dominated convergence
theorem that
$\nabla R_\lambda \psi_n (z) = \int_{\R^d} \nabla r_\lambda (z-y)
\psi_n (y) \rd y$ converges to 0 boundedly as $n\to \infty$.
One concludes then from \eqref{e:2.6} and the dominated convergence
theorem that
$$ \lim_{n\to \infty} \bbE_\bfQ\left[\int_0^\infty \e^{-\lambda t}
    b(X_t)\cdot\nabla R_\lambda \psi_n(X_t) \rd t\right]
 =0.
$$
Applying Fatou's lemma to \eqref{e:2.9} with $\psi_n$ in place of $g$ yields that
\[ \bbE_\bfQ\left[\int_0^\infty \e^{-\lambda t} \id_A(X_t)\rd t\right]
 \leq  \liminf_{n\to \infty}
 \bbE_\bfQ\left[\int_0^\infty \e^{-\lambda t} \psi_n(X_t)\rd t\right]
 =  \liminf_{n\to \infty}R_\lambda \psi_n (x) =0,
 \]
where the last equality is due to \eqref{e:2.1} and the dominated convergence theorem. This establishes \eqref{e:2.12} for any bounded
and hence for any subset $A\subset \R^d$ having $m(A)=0$.

\medskip

(iv) We now show that \eqref{e:2.9} holds for any function $g$ on
$\R^d$ with $|g|\leq c |b|$ as well.

Let $g$ be a function on $\R^d$ with $|g|\leq c |b|$ for some $c>0$.
Fix $M>0$ and define $g_M=\left( ((-M)\vee g)\wedge M\right) \id_{B(0, M)}$.
Let $\phi$ be a positive smooth function on $\R^d$ with compact
support such that $\int_{\R^d} \phi(y) \rd y=1$. For $n\geq 1$,
set $\phi_n(y)= n^d \phi(n y)$ and $f_{n} (z):= \int_{\R^d} \phi_n(z-y) g_M(y) \rd y$. Then $f_{n}\in C^\infty_c(\R^d)$,
$|f_{n}|\leq M$,
and $f_{n}$ converges to $g_M$ almost everywhere on $\R^d$
as $n\to \infty$.
In view of \eqref{e:2.12} and the bounded convergence theorem,
$\lim_{n\to \infty} R_\lambda f_{ n}(x)= R_\lambda g_{M}(x)$
and
$$
\lim_{n\to \infty} \bbE_\bfQ\left[\int_0^\infty
  \e^{-\lambda t} f_{n}(X_t)\rd t\right]
  = \bbE_\bfQ\left[\int_0^\infty \e^{-\lambda t} g_{M}(X_t)\rd t\right].
$$
On the other hand, by \eqref{e:2.2} and the dominated convergence theorem,
$\nabla R_\lambda f_n$ converges boundedly on $\R^d$
to $\nabla R_\lambda g_M$
as $n\to \infty$. So we deduce from \eqref{e:2.9}
with $f_n$ in place of $g$ and take $n\to \infty$ that
\begin{equation} \label{e:2.13}
  \bbE_\bfQ\left[\int_0^\infty \e^{-\lambda t} g_M(X_t)\rd t\right] =
  R_\lambda g_M(x) + \bbE_\bfQ\left[\int_0^\infty \e^{-\lambda t}
    b(X_t)\cdot\nabla R_\lambda g_M(X_t) \rd t\right].
\end{equation}

Clearly by the dominated convergence theorem, \eqref{e:2.1} and
\eqref{e:2.6},
$$ \lim_{M\to \infty} \bbE_\bfQ\left[\int_0^\infty \e^{-\lambda t} g_M(X_t)\rd t\right] = \bbE_\bfQ\left[\int_0^\infty \e^{-\lambda t} g(X_t)\rd t\right] \quad \text{and} \quad
\lim_{M\to \infty} R_\lambda g_M(x) = R_\lambda g (x),
$$
while in view of \eqref{e:2.2} and \eqref{e:2.3},
$\nabla R_\lambda g_M (z) = \int_{\R^d} \nabla r_\lambda(z-y) g_M (y)
 \rd y$ converges boundedly on $\R^d$ to $\nabla R_\lambda g (z)$.
Thus by \eqref{e:2.6} and the dominated convergence theorem,
$$ \lim_{M\to \infty} \bbE_\bfQ\left[\int_0^\infty \e^{-\lambda t}
    b(X_t)\cdot\nabla R_\lambda g_M(X_t) \rd t\right]
    = \bbE_\bfQ\left[\int_0^\infty \e^{-\lambda t}
    b(X_t)\cdot\nabla R_\lambda g (X_t) \rd t\right].
$$
The last two displays together with \eqref{e:2.13}
 establish the claim that \eqref{e:2.9} holds
for any $g$ with $|g| \leq c |b|$.

\medskip

(v) Define a linear functional $V_\lambda$ by
\[ V_\lambda f = \bbE_\bfQ\left[ \int_0^\infty \e^{-\lambda t} f(X_t) \rd
t\right]. \]
Then \eqref{e:2.9} can be rewritten as
\begin{equation}
  \label{e:2.14}
  V_\lambda g = R_\lambda g(x) + V_\lambda (BR_\lambda g)
  \quad \text{for } g\in C^\infty_0(\R^d)\bigcup\{ g: |g| \leq c |b|
  \text{ for some }c>0\},
\end{equation}
where $B$ is the operator defined by
\[ Bf(x) = b(x) \cdot \nabla f(x). \]
Fix $g\in C_c^\infty (\R^d)$. It follows from \eqref{e:2.2} that
$| BR_\lambda g| \leq c |b|$ for some constant $c>0$. Applying \eqref{e:2.14}
with  $B R_\lambda g$ in place of $g$ yields
\begin{equation}
  \label{e:2.15}
  V_\lambda(B R_\lambda g) = R_\lambda (B R_\lambda g)(x) + V_\lambda(B
  R_\lambda B R_\lambda g).
\end{equation}
Repeating this procedure, we get that for every
$g\in C^\infty_c(\R^d)$ and every integer $N\geq 1$,
\begin{equation}
  \label{e:2.16}
  V_\lambda g = \sum_{k=0}^N R_\lambda (B R_\lambda)^k g(x) +
  V_\lambda(B( R_\lambda B)^{N}R_\lambda g).
\end{equation}

It follows from  Lemma \ref{L:2.4} that for $\lambda > \lambda_0$,
\begin{align*}
  |V_\lambda(B(R_\lambda B)^N R_\lambda g)|
  \leq & \Vert (\nabla
R_\lambda b)^N \nabla R_\lambda g\Vert_\infty \bbE_\bfQ\left[
  \int_0^\infty \e^{-\lambda t} |b(X_t)| \rd t\right] \\
\leq & 2^{-N} \Vert
\nabla R_\lambda g\Vert_\infty \bbE_\bfQ\left[
  \int_0^\infty \e^{-\lambda t} |b(X_t)| \rd t\right],
\end{align*}
which tends to $0$ as $N\to\infty$. Passing  $N\to\infty$ in
\eqref{e:2.16} gives
\begin{equation}
  \label{e:2.17}
  V_\lambda g = \sum_{k=0}^\infty R_\lambda (B R_\lambda)^k g(x).
\end{equation}

Note that $\bbP_x$ is also a solution to the martingale problem for
$(\sL^{b},C_c^\infty(\R^d))$ with initial value $x$. Then \eqref{e:2.17} also holds
with $\bfQ$ replaced by $\bbP_x$, that is,
\[
 \bbE_x\left[\int_0^\infty \e^{-\lambda t} g(X_t)\rd t\right] =
\sum_{k=0}^\infty R_\lambda (B R_\lambda)^k g(x).
 \]
Consequently
\begin{equation}
  \label{e:2.18}
  \bbE_\bfQ\left[\int_0^\infty \e^{-\lambda t} g(X_t)\rd t\right] =
  \bbE_x\left[\int_0^\infty \e^{-\lambda t} g(X_t)\rd t\right]
\end{equation}
for every $g\in C^\infty_c(\R^d)$ and $\lambda>\lambda_0$.
By the uniqueness
of the Laplace transform, we have $\bbE_\bfQ[g(X_t)]=\bbE_x[g(X_t)]$ for
all $t$, or, the one-dimensional distributions of $X_t$ under $\bfQ$
and $\bbP_x$ are the same. By a standard argument using regular conditional probability (see, e.g., the proof of Theorem VI.3.2 in \cite{B}), one
 obtains equality of all finite-dimensional distributions and hence $\bfQ =
\bbP_x$. The uniqueness for the martingale problem for $(\sL^{b}, C^\infty_c(\R^d))$ is thus proved.
\end{proof}

\medskip

\noindent{\bf Proof of Theorem \ref{T:2.2}.}
The existence and uniqueness for the martingale problem for $(\sL^b, C^\infty_c(\R^d)$ is established in Theorem  \ref{T:2.5}.
By the uniqueness, the remaining assertions then follow from Proposition \ref{P:2.1}. \qed

\section{Stochastic differential equation}\label{S:3}

It is known that for any $\alpha\in (0, 2)$ the fractional Laplacian
$\Delta^{\alpha/2}$
can be written in the form
\begin{equation}\label{e:3.2}
 \Delta^{\alpha /2} u(x)\, =\,  \int_{\R^d}
  \big( u(x+z)-u(x)-\nabla u(x)\cdot z
\id_{\{|z|\leq 1\}} \big)  \frac{{\cal A}(d, - \alpha)
}{|z|^{d+\alpha}}\, \rd z,
\end{equation}
where ${\cal A}(d, - \alpha)$ is a normalizing constant so that
\begin{equation}\label{e:3.2b}
\int_{\R^d} \left( \e^{\im\xi \cdot z} -1-\im\xi \cdot z
\id_{\{|z|\leq 1\}}\right)
\frac{{\cal A}(d, -\alpha)}{|z|^{d+\alpha}} \rd z
= -|\xi |^\alpha, \qquad \xi \in \R^d.
\end{equation}
In fact, ${\cal A}(d, -\alpha)$ can be computed explicitly in
terms of $\Gamma$-function:
$$
{\cal A}(d, - \alpha)= \alpha2^{\alpha-1}\pi^{-d/2} \Gamma(\frac{d+\alpha}2) \Gamma(1-\frac{\alpha}2)^{-1}.
$$
When $\alpha \in (1, 2)$ as is assumed in this paper,
the $\id_{\{|z|\leq 1\}}$ term can be dropped from both
\eqref{e:3.2} and \eqref{e:3.2b}.
It is also known that the symmetric $\alpha$-stable process $Y$
has L\'evy intensity function
\begin{equation}\label{e:3.1}
J(x, y)={\cal A}(d, -\alpha)|x-y|^{-(d+\alpha)}.
\end{equation}
The L\'evy intensity function gives rise to a L\'evy system $(N, H)$
for $X$, where $N(x, \rd y)=J(x, y)\rd y$ and $H_t=t$, which describes the
jumps of the process $Y$.

Recall that $(X^b, \bbP_x, x\in \R^d)$
is the unique solution to the martingale problem for $(\sL^b,
C^\infty_c(\R^d))$ on the canonical Skorokhod space $\Omega:=\bbD([0,
\infty); \R^d)$. The following theorem shows that $X^b$ is a weak
solution to the SDE \eqref{e:1.8}.

\begin{thm}\label{T:3.1}
There exists a process $Z$ defined on $\Omega$ so that all its paths are right
continuous and admit left limits (rcll), and
that under each $\bbP_x$, $Z$ is a rotationally symmetric $\alpha$-stable process on $\R^d$ and
\begin{equation}\label{e:3.3}
X^b_t=x +Z_t + \int_0^t b(X^b_s) \rd s, \quad t\geq 0.
\end{equation}
\end{thm}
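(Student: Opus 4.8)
The plan is to recover the driving noise $Z$ by subtracting the drift from $X^b$ and then to identify the law of the resulting process as that of a symmetric $\alpha$-stable process, using the martingale problem characterization of L\'evy processes. First I would define, on $\Omega = \bbD([0,\infty);\R^d)$,
\[
 Z_t := X^b_t - X^b_0 - \int_0^t b(X^b_s)\,\rd s .
\]
For this to make sense I must first check that $t\mapsto \int_0^t b(X^b_s)\,\rd s$ is well defined and absolutely continuous $\bbP_x$-a.s.; this follows from \eqref{e:2.5} (with $\lambda = \lambda_0$), which gives $\bbE_x[\int_0^\infty \e^{-\lambda t}|b(X^b_t)|\,\rd t]<\infty$, hence $\int_0^t |b(X^b_s)|\,\rd s<\infty$ a.s.\ for every $t$. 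Since $X^b$ has rcll paths and the integral term is continuous, $Z$ has rcll paths. The key point is that, under each $\bbP_x$, the process $Z$ should be a L\'evy process with the characteristic exponent $|\xi|^\alpha$, independent of the starting point $x$.

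To identify the law of $Z$ I would use the L\'evy system $(N,H)$ of $X^b$ together with stochastic calculus, exactly as announced in the introduction. The martingale problem for $(\sL^b,C_c^\infty(\R^d))$ tells us that for $f\in C_c^\infty(\R^d)$,
\[
 f(X^b_t) - f(X^b_0) - \int_0^t \Big( \Delta^{\alpha/2} f(X^b_s) + b(X^b_s)\cdot\nabla f(X^b_s)\Big)\rd s
\]
is a $\bbP_x$-martingale; writing $\Delta^{\alpha/2}$ in the jump form \eqref{e:3.2} (without the indicator, since $\alpha>1$), the term $\int_0^t \Delta^{\alpha/2}f(X^b_s)\,\rd s$ is precisely the compensator coming from the L\'evy system with $N(x,\rd y) = J(x,y)\,\rd y = \calA(d,-\alpha)|x-y|^{-(d+\alpha)}\,\rd y$ and $H_t=t$, which in particular does not depend on $x$. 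A standard computation then shows that for $f\in C_c^\infty$,
\[
 f(x+Z_t) - f(x) - \int_0^t \Delta^{\alpha/2}f(x+Z_s)\,\rd s
\]
is a $\bbP_x$-martingale: the $b\cdot\nabla f$ term is cancelled by the change of variables coming from the continuous finite-variation part $\int_0^\cdot b(X^b_s)\,\rd s$ in $X^b_t = x + Z_t + \int_0^t b(X^b_s)\,\rd s$, and the jump part of $Z$ coincides with the jump part of $X^b$ (the drift integral being continuous), whose compensator is the $x$-independent kernel above. Hence $Z$ solves the martingale problem for $(\Delta^{\alpha/2}, C_c^\infty(\R^d))$ started at $0$, and since that martingale problem is well-posed with the symmetric $\alpha$-stable process as its unique solution (the $b=0$ case of Theorem \ref{T:2.2}), $Z$ is under each $\bbP_x$ a rotationally symmetric $\alpha$-stable process; \eqref{e:3.3} then holds by the definition of $Z$.

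The main obstacle is the second step: carefully justifying the It\^o-type change of variables that passes from the martingale property of $M^f$ for $X^b$ to the martingale property of the analogous functional of $Z$, when $f$ is only $C_c^\infty$ and $\Delta^{\alpha/2}f$ — hence $\sL^b f$ — is unbounded, and $b$ is merely in the Kato class. The delicate points are: (a) making precise that the jumps of $Z$ and of $X^b$ agree and that the purely discontinuous martingale parts match, so that the compensator of $Z$'s jumps is the $x$-free L\'evy kernel; (b) controlling the absolutely continuous ``drift'' correction term $\int_0^t \nabla f(X^b_s)\cdot b(X^b_s)\,\rd s$ and showing it exactly cancels the $b\cdot\nabla f$ term, which requires the integrability furnished by \eqref{e:2.5}; and (c) passing from $C_c^\infty$ to enough test functions (e.g.\ via a localization/stopping-time argument as in Step (i) of the proof of Theorem \ref{T:2.5}, or by approximating $f(z)=\e^{\im\xi\cdot z}$ by compactly supported smooth functions) to conclude that $\bbE_x[\e^{\im\xi\cdot(Z_t-Z_s)}\mid \calF_s] = \e^{-(t-s)|\xi|^\alpha}$, which identifies $Z$ as the desired L\'evy process. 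Once these are in place, the conclusion is immediate.
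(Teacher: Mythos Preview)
Your outline is correct and reaches the same conclusion, but it differs from the paper's proof in two respects worth noting. First, you \emph{define} $Z$ by subtracting the drift from $X^b$, whereas the paper \emph{constructs} $Z$ directly from the jumps of $X^b$ (as the sum of the large-jump process $M^{d,1}$ and the compensated small-jump martingale $M^{d,2}$) and only afterwards identifies the remaining continuous finite-variation part as $\int_0^t b(X^b_s)\,\rd s$ by comparing the It\^o expansion of $f(X^b_t)$ with the martingale-problem identity. The paper's order has the advantage that it simultaneously shows the continuous local-martingale part $M$ of $X^b$ vanishes; this is a point your sketch does not explicitly address but which is needed for your ``standard computation'' to go through (otherwise a term $\tfrac12\sum_{i,j}\int\partial^2_{ij}f\,\rd\<M^i,M^j\>$ survives in the It\^o formula for $f(x+Z_t)$, and $x+Z$ need not solve the martingale problem for $\Delta^{\alpha/2}$). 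Second, for the identification of the law of $Z$ you invoke the well-posedness of the martingale problem for $(\Delta^{\alpha/2},C_c^\infty(\R^d))$, while the paper instead computes the conditional characteristic function $\bbE_x[\e^{\im\xi\cdot(Z_{t+s}-Z_t)}\mid\calF_t]=\e^{-s|\xi|^\alpha}$ directly, using It\^o's formula for $\e^{\im\xi\cdot Z_t}$ together with the L\'evy system and the Markov property of $X^b$. Your route here is arguably cleaner (and you do mention the characteristic-function alternative in your point~(c)); the paper's is more self-contained and avoids appealing to an external uniqueness result. In practice, filling in your ``main obstacle'' would require essentially the paper's computation: apply It\^o to $f(X^b_t)$ for $f\in C_c^\infty(\R^d)$, use the L\'evy system \eqref{e:3.4} to compensate the jump sum, and match against $\sL^b f$ to conclude both that $\<M^i,M^j\>\equiv 0$ and that the bounded-variation part is exactly $\int_0^\cdot b(X^b_s)\,\rd s$.
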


\begin{proof} By Theorem \ref{T:2.2}, $X^b$ is the same as the Feller process determined by kernel $q^b(t, x, y)$ in Proposition \ref{P:2.1}.
Observe that it follows from \eqref{e:2.5} that $\bbE_x\left[ \int_0^t
|b(X^b_s)| \rd s\right] <\infty$ for every $t>0$.
Let $\{\calF_t; t\geq 0\}$
be the minimal augmented filtration generated by $X^b_t$.
We know from \cite[Theorem 2.6]{CKS2012a} that $X^b$ has the same
L\'evy system $(J(x,y)\rd y, t)$ as that of symmetric $\alpha$-stable process;
that is,  for any
$x\in \R^d$,  non-negative measurable function $f$ on
$\R^d\times \R^d$ vanishing along the diagonal $\{( x, y)\in
\R^d\times \R^d: x=y\}$, predictable process $\xi_t$ and stopping time $T$ with respect to  the filtration $\{\calF_t; t\geq 0\}$,
\begin{equation}\label{e:3.4}
\bbE_x \Big[ \sum_{s\le T} \xi_s f( X^b_{s-}, X^b_s) \Big]
= \bbE_x \left[ \int_0^T \xi_s  \left( \int_{\R^d} f(X^b_s, y)
J(X^b_s, y)\rd y \right) \rd s \right].
 \end{equation}
In particular, if $\sum_{s\leq t} f(X_{s-}^b,X_s^b)$ has
$\bbP_x$-integrable variation, then
\[ \int_0^t \left(\int_{\R^d} f(X_s^b,y)J(X_s^b,y)\rd y\right) \rd
s \]
is its dual predictable projection, that is,
\begin{equation}
  \label{e:3.4b}
  \sum_{s\leq t} f(X_{s-}^b,X_s^b) - \int_0^t \left(\int_{\R^d}
    f(X_s^b,y)J(X_s^b,y)\rd y\right) \rd s
\end{equation}
is a $\bbP_x$-martingale
(cf. \cite[Definition 5.21 and Corollary 5.31]{HWY}).

It follows from (\ref{e:3.4}) that
$\sum_{s\in [0, t]} |X^b_s -X^b_{s-}|\id_{\{ |X^b_s-X^b_{s-}|\geq 1\}}$ is
$\bbP_x$-integrable and so
$$ M^{d, 1}_t:= \sum_{s\in [0, t]} (X^b_s -X^b_{s-}) \id_{\{ |X^b_s-X^b_{s-}|\geq 1\}}
$$
 is a $\bbP_x$-martingale. Moreover,
$$ M^{d, 2}_t:= \lim_{\eps \to 0} \sum_{s\in [0, t]} (X^b_s -X^b_{s-}) \id_{\{ \eps<|X^b_s-X^b_{s-}|<1\}}
$$
is a purely discontinuous $\bbP_x$-square-integrable  martingale
with $M^{d, 2}_t-M^{d, 2}_{t-}=(X^b_t -X^b_{t-}) \id_{\{|X^b_t-X^b_{t-}|<1\}}$.
Define $Z_t=M^{d, 1}_t+M^{d, 2}_t$, which is a martingale under each
$\bbP_x$ with
\begin{equation}\label{e:3.5}
Z_t-Z_{t-}=X^b_{t}-X^b_{t-}\quad \text{for } t>0.
\end{equation}
Since $(X^b, \bbP_x)$ solves the martingale problem,
$X^b$ is a semimartingale. In view of \eqref{e:3.5}, it
 can be uniquely expressed as
\begin{equation}\label{e:3.6}
 X^b_t=X^b_0+ M_t+ Z_t +A_t,
\end{equation}
where $M=(M^1, \cdots, M^d)$ is a continuous
local martingale and $A$ is a  continuous
process of finite variation. We will use $\< M^i, M^j\>$ to denote
the quadratic covariation of $M^i$ and $M^j$.
 For $f\in C^\infty_c(\R^d)$, applying Ito's formula to \eqref{e:3.6}
 (cf. \cite[Theorem 9.35]{HWY})
 and using the L\'evy system for $X^b$ and \eqref{e:3.2},
 we have
\begin{align*}
 &  f(X^b_t)-f(X^b_0)\\
  =&  \int_0^t \nabla f (X_{s-}) \rd (M_s+ Z_s) + \int_0^t \nabla f(X_{s}) \rd A_s  + \frac12 \sum_{i, j=1}^d \int_0^t \frac{\partial^2 f}
  {\partial x_i \partial x_j} (X^b_s) \rd \< M^i, M^j\>_s \\
&   \qquad + \sum_{s\leq t} \left( f(X^b_s)-f(X^b_{s-})-\nabla f(X^b_{s-}) \cdot
   (X^b_s -X^b_{s-})\right) \\
  =&\text{ local martingale } + \int_0^t \nabla f(X^b_s) \rd A_s
  + \frac12 \sum_{i, j=1}^d \int_0^t \frac{\partial^2 f}
  {\partial x_i \partial x_j} (X^b_s) \rd \< M^i, M^j\>_s \\
& \qquad +
  \int_0^t  \int_{\R^d}\left(f(X^b_s+z)-f(X^b_s)-\nabla f(X^b_s) \cdot z\right) \frac{{\cal A}(d, -\alpha)} {|z|^{d+\alpha}}
   \, \rd z \,  \rd s \\
 =& \text{ local martingale } + \int_0^t \nabla f(X^b_s) \rd A_s
  + \frac12 \sum_{i, j=1}^d \int_0^t \frac{\partial^2 f}
  {\partial x_i \partial x_j} (X^b_s) \rd \< M^i, M^j\>_s
  + \int_0^t \Delta^{\alpha/2} f(X^b_s) \rd s.
\end{align*}
Since $(X^b_t, \bbP_x)$ solves the martingale problem for
$(\sL^b, C^\infty_c(\R^d))$,
\[ M^f_t:=f(X^b_t)-f(X^b_0)-\int_0^t
\sL^b f(X^b_s) \rd s \]
is a martingale and so we conclude
$$
 \int_0^t b(X^b_s) \cdot \nabla f(X^b_s) \rd s
= \int_0^t \nabla f(X^b_s) \rd A_s
  + \frac12 \sum_{i, j=1}^d \int_0^t \frac{\partial^2 f}
  {\partial x_i \partial x_j} (X^b_s) \rd \< M^i, M^j\>_s .
$$
Since the above holds for every $f\in C^\infty_c (\R^d)$, we must have
$$ A_t =\int_0^t b(X^b_s) \rd s \quad \text{and} \quad
 \<M^i, M^j\>_t =0 \text{ for every } 1\leq i, j\leq d.
$$
Hence $M=0$ and so by \eqref{e:3.6}, $X^b_t=X^b_0+Z_t + \int_0^t
b(X^b_s) \rd s$.

It remains to show that $Z$ is a rotationally symmetric $\alpha$-stable process under $\bbP_x$.
For $\xi \in \R^d$, applying Ito's formula
for $f(x)=\e^{\im\xi \cdot x}$ to   martingale $Z_t$ and using
L\'evy system formula \eqref{e:3.4}, we get
\begin{align*}
\bbE_x [ \e^{\im\xi \cdot Z_t} ]
=& 1 + \bbE_x \left[ \sum_{s\leq t} \left( \e^{\im\xi\cdot Z_s}
-\e^{\im\xi\cdot Z_{s-}}-  \e^{\im\xi \cdot Z_{s-}}\, \im \xi
\cdot (Z_s -Z_{s-})\right)\right] \\
=& 1 + \bbE_x \left[ \sum_{s\leq t}   \e^{\im\xi\cdot Z_{s-}}
 \left( \e^{\im\xi \cdot (X^b_s-X^b_{s-})}-1 - \im \xi
\cdot (X^b_s -X^b_{s-})\right)  \right] \\
=& 1 + \bbE_x \left[ \int_0^t \int_{\R^d}
  \e^{\im\xi\cdot Z_{s-}} \left(\e^{\im\xi \cdot z}-1-  \im \xi
\cdot z \right) \frac{{\cal A}(d, -\alpha)}{|z|^{d+\alpha}}
\rd z \rd s \right] \\
=& 1 + \bbE_x \left[ \int_0^t \int_{\R^d}
  \e^{\im\xi\cdot Z_{s}} \left(\e^{\im\xi \cdot z}-1-  \im \xi
\cdot z \id_{\{|z|\leq 1\}} \right) \frac{{\cal A}(d, -\alpha)}{|z|^{d+\alpha}}
\rd z  \rd s \right] \\
=& 1- | \xi|^\alpha
\int_0^t    \bbE_x \left[ \e^{\im\xi\cdot Z_{s}} \right] \rd s ,
\end{align*}
where in the last equality, we used \eqref{e:3.2b} and Fubini's theorem.
Set $\phi (t)= \bbE_x [ \e^{\im\xi \cdot Z_t} ]$.
We see from above that
  $\phi (t)=1 -|\xi|^\alpha
\int_0^t \phi (s) \rd s$. Differentiate in $t$, one solves easily
that $\phi (t)=\e^{-t |\xi|^\alpha}$.
Now by the Markov property of $X^b$, we have for every $s$, $t>0$,
\begin{align*}
\bbE_x \left[ \e^{\im\xi \cdot (Z_{t+s}-Z_t)} \big| \calF_t \right]
=& \bbE_{X^b_t} \left[ \e^{\im\xi \cdot Z_s} \right] = \e^{-s |\xi|^\alpha}.
\end{align*}
This proves that, under each $\bbP_x$,
$Z$ is a process having independent stationary increments
and its characteristic function is $\e^{-s |\xi|^\alpha}$;
that is, $Z$ is a rotationally symmetric $\alpha$-stable process on $\R^d$.
\end{proof}

\medskip

Now we are ready to complete the proof for the uniqueness of weak
solution to the SDE \eqref{e:1.8}.
\medskip

\noindent{\bf Proof of Theorem \ref{T:1.2}.}
Fix $x\in \R^d$.
Theorem \ref{T:3.1} gives the existence of a weak solution to SDE
\eqref{e:1.8}.
Using Ito's formula, every weak solution to \eqref{e:1.8} solves
the martingale problem for $(\sL^b, C^\infty_c (\R^d))$.
So the uniqueness of weak solution to \eqref{e:1.8} follows
from the uniqueness of the martingale problem for
$(\sL^b, C^\infty_c (\R^d))$, which is established in Theorem
\ref{T:2.2}. \qed

\medskip

\noindent{\bf Proof of Corollary \ref{C:1.3}.}
By Theorem \ref{T:2.2}, $\alpha$-stable process $X^b$ with drift $b$
is the Feller process with transition density function $q^b(t,x, y)$
constructed in Proposition \ref{P:2.1}. The conclusion of the Corollary
follows readily from Proposition \ref{P:2.1} and
\cite[Theorem 1.3]{CKS2012a}. \qed

\bibliographystyle{abbrv}

\vskip 0.3truein

{\bf Zhen-Qing Chen}\\
\indent Department of Mathematics, University of Washington, Seattle, WA
98195, USA \\
\indent Email: \texttt{zqchen@uw.edu}

\vspace{1em}
{\bf Longmin Wang}\\
\indent School of Mathematical Sciences, Nankai University, Tianjin 300071,
P. R. China \\
\indent Email: \texttt{wanglm@nankai.edu.cn}

\end{document}